 \definecolor{darkgreen}{rgb}{0,0.5,0}
\tikzset{main node/.style={circle,fill=blue!20,draw,inner sep=1pt},}
\definecolor{BBlue}{cmyk}{.98,0.10,0,.25}
\definecolor{LightBlue}{rgb}{0.2,0.6,1}
\definecolor{DarkBlue}{rgb}{0,0,.5}
\definecolor{DarkGreen}{rgb}{0,0.4,.2}
\definecolor{LightGreen}{rgb}{0.6,1,.4}
\definecolor{RRed}{cmyk}{0,0.6,0.4,0}
\definecolor{GGrey}{rgb}{1,0.97,0.97} %grey
\newtheorem{theorem}{Theorem}[section]
\newtheorem{problem}[theorem]{Problem}
\newtheorem{proposition}[theorem]{Proposition}
\newtheorem{corollary}[theorem]{Corollary}
\newtheorem{definition}[theorem]{Definition}
\newtheorem{remark}[theorem]{Remark}
\numberwithin{equation}{section}
\newtheorem{assumption}[theorem]{Assumption}
\DeclareMathOperator*{\argmin}{arg\,min}
\newcommand{\real}{{\mathbb{R}}}
\newcommand{\x}{{\mathbf{x}}}
\newcommand\subscr[2]{#1_{\textup{#2}}}
\newcommand{\oprocendsymbol}{\hbox{$\bullet$}}
\newcommand{\oprocend}{\relax\ifmmode\else\unskip\hfill\fi\oprocendsymbol}
\newcommand{\hodclf}{\textsf{g-dclf}}
\newcommand{\adc}{\textsf{adc}}
\newcommand{\boldnu}{\boldsymbol{\nu}}
\newcommand{\boldu}{\mathbf{u}}
\DeclareMathOperator{\vect}{vec}
\newcommand\ld{\subscr{\ell}{decr}}
\newcommand{\interior}{\operatorname{int}}
\newcommand{\inistatetimev}{x^{k_0}}
\newcommand{\initimetimev}{k_0}
\renewcommand\subsection{\@startsection{subsection}{2}%
  \z@{-.5\linespacing\@plus-.7\linespacing}{.5\linespacing}%
  {\normalfont\scshape}}
\renewcommand\subsubsection{\@startsection{subsubsection}{3}%
  \z@{.5\linespacing\@plus.7\linespacing}{-.5em}%
  {\normalfont\scshape}}
\begin{document}

\title[]{Flexible-step Model Predictive Control\\based on generalized Lyapunov functions}

\begin{abstract}
We present a novel nonlinear model predictive control (MPC) scheme with relaxed stability criteria, based on the idea of generalized discrete-time control Lyapunov functions. These functions need to satisfy an average descent over a finite window of time, rather than a descent at every time step. 
One feature of this scheme is that it allows for implementing a flexible number of control inputs in each iteration, in a computationally attractive manner, while guaranteeing recursive feasibility and stability. 
The benefits of our flexible-step implementation are also demonstrated in an application to  nonholonomic systems, where the one-step standard implementation may suffer from lack of asymptotic convergence.
\end{abstract}

%\thanks{}

\author[Annika F\"urnsinn]{Annika F\"urnsinn}
\address{Department of Mathematics and Statistics\\ Queen's University, Kingston, ON K7L 3N6, Canada}
\email{19af7@queensu.ca}
\author[Christian Ebenbauer ]{Christian Ebenbauer }
\address{Chair of Intelligent Control Systems\\
RWTH Aachen University, 52062 Aachen, Germany}
\email{christian.ebenbauer@ic.rwth-aachen.de}
\author[Bahman Gharesifard ]{Bahman Gharesifard }
\address{Department of Electrical and Computer Engineering\\
University of California at Los Angeles,
Los Angeles, CA 90095}
\email{gharesifard@ucla.edu}

\maketitle

\section{Introduction}\label{sec:intro}
Model predictive control (MPC) is a method to approximate the solution of an infinite-horizon optimal control problem by considering a sequence of finite-horizon optimal control problems implemented in a receding horizon fashion. In the standard discrete-time MPC scheme, the finite-horizon optimal control problem is solved at each time step with the current state as the initial state, and the first element of the derived optimal control sequence is implemented.

Since stability of MPC schemes does not follow from the associated infinite-horizon optimal control problem, several approaches have been proposed to guarantee stability~\cite{DM-JR-CR-PS:00, DM:14,JR-DM-MD:17, LG-JP:17}. In the standard MPC scheme, a terminal cost and a terminal constraint are added to the optimal control problem. 
By considering the optimal value function as the Lyapunov function, a strictly decreasing sequence of Lyapunov function values is obtained, which implies convergence of the state to zero. In this paper, we replace the \emph{strict descent} of the Lyapunov function values with an \emph{average descent} while preserving stability. We propose a new MPC scheme which enables the implementation of a flexible number of elements of the optimal input sequence. Here, with flexible number we mean that in each iteration the number of implemented elements of the corresponding optimal input sequence is not constant (e.g. one) but instead flexible (variable). The flexible-step implementation is enabled by the novel notion of generalized discrete-time control Lyapunov functions (\hodclf), which depend on the state \emph{and} the control. The existence of such a \hodclf~for a control system makes it possible to add a \emph{feasible} average decrease constraint on the Lyapunov function values to the finite-horizon optimal control problem of our MPC scheme. The average descent in turn guarantees that at least one predicted Lyapunov function value along the trajectory of the system is less than the Lyapunov function value at the beginning of the prediction. In the proposed MPC scheme, we implement the optimal input sequence until a descent occurs. Consequently, this part of our scheme gives rise to a flexible-step implementation. 

\subsection{Related Work}
An important objective in many MPC schemes is stability; the three most common approaches can be described as follows. First, there are schemes that rely on terminal constraints and/or costs, e.g. standard MPC~\cite{DM-JR-CR-PS:00,DM:14, JR-DM-MD:17, LG-JP:17}. One issue that this approach faces is that with the typical choice of a quadratic cost function, MPC might not be stabilizing~\cite{MM-KW:17}. This can be resolved by designing a non-quadratic cost function based on homogeneous approximation~\cite{JMC-LG-KW:20}. Second, terminal ingredients are avoided altogether through a sufficiently long prediction horizon~\cite{MA-GB:95, AJ-JP-JH:01, LG-JP:17}. Finally, there are schemes that directly incorporate a Lyapunov function decay constraint in the optimization problem, often referred to as Lyapunov-based~\cite{PM-NE-PC:05}.

There are several MPC approaches that aim to relax the one-step standard implementation, one being the so-called ``finite-step'' approaches~\cite{NN-RG-LG-FW:20,ML-VS:15,JH-ML-RT:17} the standard MPC assumptions are relaxed. 
In~\cite{JH-ML-RT:17}, a stabilizing tube-based MPC for linear parameter-varying systems is presented. The usual invariant set used in standard MPC is replaced by finite-step contractive sets, which are easier to compute. In the latter sets, the state of a control system is allowed to leave the set as long as it returns after a finite number of time steps.
Based on~\cite{SdOK-MM:00}, the work in~\cite{NN-RG-LG-FW:20} generalizes control Lyapunov functions by finite-step control Lyapunov functions. Such functions are allowed to increase along the trajectory of a system as long as at the end of the prediction horizon (after a finite number of steps) they decay compared to the beginning of the prediction. In the finite-step control Lyapunov function-based MPC scheme, the considered finite-step control Lyapunov function is the objective function and is incorporated as a constraint. The exact same finite number of steps (greater than one) is implemented in each iteration of the scheme.
In \cite{ML-VS:15}, finite-step control Lyapunov functions are used alongside finite–step contractive sets. More precisely, the finite-step Lyapunov functions are used in the terminal cost of the MPC optimization problem. As in~\cite{NN-RG-LG-FW:20}, the Lyapunov function only depends on the state.

Our work, however, belongs to the category of so-called ``flexible-step'' approaches~\cite{MA:17,THY-EP:93,KW-MM-GM-RG-JP:17}, where a flexible number of elements of the optimal input sequence is implemented in the MPC scheme. A \emph{variable} prediction horizon~\cite{HM-DM:93,HM:97} is employed in contractive MPC~\cite{THY-EP:93}. In the optimal control problem of this setting, the prediction horizon is a decision variable. Hence, solving the MPC scheme yields the optimal control sequence and the optimal prediction horizon. In each iteration, the optimal control sequence is then implemented over the whole optimal prediction horizon. In the discrete-time setting, optimizing over the prediction horizon results in an undesirable integer decision variable. In~\cite{MA:17}, a contraction-based MPC scheme is proposed, which avoids the integer optimization for the price of solving two optimization problems in each iteration. Based on a finite-step Lyapunov inequality, a flexible-step implementation is accomplished in~\cite{KW-MM-GM-RG-JP:17} to reduce the computational burden of the proposed MPC scheme. Although it serves a different purpose, we want to mention event-triggered MPC~\cite{AE-DD-KK:11} as a way of achieving a flexible-step implementation.

\subsection{Contribution}
In contrast to the former MPC schemes, the proposed scheme in this work achieves stability, while allowing the implementation of a flexible number of steps, 
thanks to the average decrease constraint, without the above-mentioned computational drawbacks.

One main technical contribution of this paper is the introduction of generalized discrete-time control Lyapunov functions, which cover classical Lyapunov functions but also the control-dependent objective function. 
Notably, the therewith proposed flexible-step approach includes many known MPC schemes, such as terminal ingredient-based MPC and contraction-based MPC~\cite{DM-JR-CR-PS:00, DM:14}. Furthermore, we derive theoretical results about recursive feasibility and stability for our proposed scheme. We demonstrate the functionality of our method by utilizing it to study the stabilization of nonholonomic systems, which are typically a bottleneck for standard MPC schemes. Several simulation studies confirm the anticipated benefits of the flexibility coming from the \hodclf s.

\subsection{Organization}
The paper is organized as follows. We explain our notation in Section~\ref{sec:prelim}. In Section~\ref{sec:novel_scheme}, we present the main results of this paper, from which we derive a novel implementation scheme for MPC. The proofs of the main results can be found in Section~\ref{sec:stability_proofs}. In Section~\ref{sec:application_nonholo_sys}, we apply the derived scheme to nonholonomic systems. We present the numerical results and analyze these in detail. Finally, we draw our conclusions and discuss some future directions in Section~\ref{sec:conclusion}. 

\subsection{Notation}\label{sec:prelim}
We denote the set of non-negative (positive) integers by $\mathbb{N}$ ($\mathbb{N}_{>0}$), the set of (non-negative) reals by $\real$ ($\real_{\geq 0}$), and the interior of a subset $S \subseteq \mathbb{R}^n$ by $\interior(S)$. The set of real matrices of the size $p\times q$ is denoted by $\real^{p,q}$. We make use of boldface when considering a finite sequence of vectors, e.g. $\mathbf{u}= [u_{0},\dots,u_{N-1}] \in \real^{p,N}$, we refer to $j$th component by $u_j$ and to the subsequence going from component $i$ to $j$ by $\mathbf{u}_{[i:j]}$. Similarly, $\mathbf{U}_{[0:N-1]} \subseteq \mathbb{R}^{p,N}$ denotes a set of (ordered) sequences of vectors with components zero to $N-1$. When such a set depends on the initial state $x$, it is expressed by $\mathbf{U}_{[0:N-1]}(x)$. As usual, $\boldu^{*}$ denotes the solution to the optimal control problem solved in the iteration of an MPC scheme, the symbol $\boldu^{*-}$ denotes the optimal control sequence of the \emph{previous} iteration. For later use, we recall that a function $V: \mathbb{R}^n\times\mathbb{R}^{p, q} \to \mathbb{R}$ is called positive definite if $V(x,\boldu) = 0$ is equivalent to $ (x,\boldu) = (0,\mathbf{0})$ and $ V(x,\boldu) > 0$ for all $(x,\boldu) \in \mathbb{R}^n\times\mathbb{R}^{p, q}\setminus \{(0, \mathbf{0})\}$. Furthermore, a function $\alpha: \mathbb{R}^n\times\mathbb{R}^{p, q} \to \mathbb{R}$ is called radially unbounded if $\|(x,\boldu)\| \to \infty$ implies $\alpha(x,\boldu) \to \infty$, where $\|\cdot\|$ is the Euclidean norm. Note that $x$ is a vector and $\boldu$ is a matrix here, so with $\|(x,\boldu)\|$ we implicitly refer to the norm of $[x^T,\vect(\boldu)^T]$, where $\vect(\boldu)$ is the usual vectorization of a matrix into a column vector.

In the setting of MPC, it is important to distinguish between predictions and the actual implementations at a given time index $k \in \mathbb{N}$.
In particular, we use $x^k, u^k$ to refer to \emph{predictions}, whereas we use $x(k), u(k)$ to refer to the \emph{actual} states and \emph{implemented} inputs, respectively.

\section{Main results}\label{sec:novel_scheme}
Our main objective in this work is to propose a new implementation scheme with some key advantages that we discuss shortly. In order to introduce this scheme, we require a few notions related to stability of control systems. Throughout this paper, we consider nonlinear discrete-time control systems of the form
\begin{align}\label{eqn:control_system}
    x^{k+1} &= f(x^k,u^k),
\end{align} 
where $ k \in \mathbb{N} $ denotes the time index, $x^k \in X \subseteq \mathbb{R}^n$ is the state with the initial state $x^0 \in X$ and $u^k \in U \subseteq \mathbb{R}^p$ is an input. The state and input constraints satisfy $0 \in \interior(X)$, $0 \in \interior(U)$ and $f: X \times U \to \mathbb{R}^n$ is continuous with $f(0,0)=0$. No modeling uncertainties or disturbances are considered in this work. For later use, we recall the asymptotic stability of a time-varying system~\cite{WH-VC:08, AM-LH-DL:08}. Let $k_0 \in \mathbb{N}$ and consider 
\begin{align}
\label{eqn:time-var-control-system}
    x^{k+1} = F(k,x^k), \quad k \geq k_0,
\end{align}
where $F: [k_0, \infty)\cap \mathbb{N}\times X \to \mathbb{R}^n$ and $F(k,0)=0$ for all $k \geq k_0$.
The origin is an asymptotically stable equilibrium of~\eqref{eqn:time-var-control-system} (with region of attraction $X$) if
\begin{enumerate}
    \item for all $\varepsilon>0$ and any $k_0\in \mathbb{N}$, there exists $\delta=\delta(\varepsilon,k_0)>0$ such that $\inistatetimev \in X$ with $0 <\|\inistatetimev\|<\delta$ implies $\|x^k\| < \varepsilon$ for all $k \geq k_0$;
    \item for any $k_0\in \mathbb{N}$ and $\inistatetimev \in X$, it holds $\lim_{k \to \infty}x^k=~0$.
\end{enumerate}  

\subsection{Generalized control Lyapunov function}\label{sec:notions_of_stab}
We now introduce \hodclf s that are core to our proposed MPC scheme. 
We present our stability results, which are the building blocks of our proposed scheme, postponing the proofs to Section~\ref{sec:stability_proofs}. 

The classical condition for a Lyapunov function $V$ is $V(x^{k+1}) < V(x^k)$. It requires that $V$ strictly decreases along the trajectories of a control system and results in asymptotic stability of the origin. \emph{Higher-order} Lyapunov functions (continuous time) were originally introduced in~\cite{AB:69} and therewith replaced the classical Lyapunov condition. In~\cite{AA-PP:08} the classical condition was relaxed by introducing so-called \emph{non-monotonic} Lyapunov functions, which are characterized by the following condition in discrete-time
\begin{align}\label{eqn:non-mono-lyap}
    \tau (V(x^{k+2}) - V(x^{k})) + (V(x^{k+1}) - V(x^{k})) < 0, 
\end{align}
for $\tau \geq 0$. Note that for $\tau=1$,~\eqref{eqn:non-mono-lyap} imposes that the average of the next two future values of the Lyapunov function should be less than the current value of the Lyapunov function. 
We now generalize these ideas.
\begin{definition}[Set of Feasible Controls]
For $x\in X$, we define a set of feasible controls of length $N \in \mathbb{N}_{>0}$ as 
\begin{align}\label{eqn:setfeasc}
    &\mathbf{U}_{[0:N-1]}(x):=\{\boldu_{[0:N-1]} \in \mathbb{R}^{p,N}: \text{ with $x^0=x$},u_j \in U, x^{j+1} = f(x^j,u_j) \in X,\\
&\hspace{5cm} j = 0, \dots, N-1\}.\nonumber
\end{align} 
An infinite sequence of control inputs is called feasible when equation~\eqref{eqn:setfeasc} is fulfilled for all times $j \in \mathbb{N}$. 
\end{definition}
\begin{definition}[\hodclf]\label{def:ho-dclf}
Consider the control system~\eqref{eqn:control_system}. Let $m \in \mathbb{N}_{>0}$ and $q \in \mathbb{N}$. We call $V: \mathbb{R}^n \times \mathbb{R}^{p, q} \to \mathbb{R} $ a generalized discrete-time control Lyapunov function of order $m$ (\hodclf) for system~\eqref{eqn:control_system} if $V$ is continuous, positive definite and additionally:

\underline{If $q = 0$:} 
\begin{enumerate}[i)]
\setcounter{enumi}{0}
    \item there exists a continuous, radially unbounded and positive definite function ${\alpha: \mathbb{R}^n \to \mathbb{R}}$ such that
    for any $x^0 \in$ $\mathbb{R}^n $ we have
\begin{align*}
    V(x^0) - \alpha(x^0) \geq 0;
\end{align*}
\item for any $x^0 \in \mathbb{R}^n$ there exists $\boldnu_{[0:m-1]} \in \mathbf{U}_{[0:m-1]}(x^0)$, which steers $x^0$ to some $x^{m}$, such that
\begin{align}\label{eqn:adc_q0}
    &\frac{\sigma_m V(x^{m}) + \dots +\sigma_1 V(x^1)}{m} -V(x^0) \leq -\alpha(x^0),
\end{align}
where $\sigma_m, \ldots, \sigma_1  \in \real_{\geq 0}$ and 
\begin{equation}\label{eqn:sigma_ave}
    \frac{\sigma_m + \sigma_{m-1} + \ldots + \sigma_1}{m} -1 \geq 0.
\end{equation}
\end{enumerate} 

\underline{If $q \neq 0$:}
\begin{enumerate}[i')]
\setcounter{enumi}{0}
    \item there exists a continuous, radially unbounded and positive definite function ${\alpha: \mathbb{R}^n \times \mathbb{R}^{p, q}\to \mathbb{R}}$ such that for any $(x^0,\boldu_{[0:q-1]}) \in$ ${\mathbb{R}^n \times \mathbf{U}_{[0:q-1]}(x^0)}$ we have
\begin{align}\label{eqn:cond_V_alpha}
    V(x^0, \boldu_{[0:q-1]}) - \alpha(x^0,\boldu_{[0:q-1]}) \geq 0;
\end{align}
\item for any $(x^0,\boldu_{[0:q-1]}) \in \mathbb{R}^n \times \mathbf{U}_{[0:q-1]}(x^0)$ there exists $\boldsymbol{\nu}_{[0:q+m-1]}\in \mathbb{R}^{p,q+m}$
with $\boldnu_{[l:q+l-1]} \in \mathbf{U}_{[0:q-1]}(x^l)$ for every $l \in \{0,1,\dots,m\}$, which steers $x^0$ to some $x^{m}$, such that
\begin{align}\label{eqn:dclf_orderm}
    &\frac{\sigma_m V\left(x^{m},\boldsymbol{\nu}_{[m:q+m-1]}\right) + \dots +\sigma_1 V\left(x^1,\boldsymbol{\nu}_{[1:q]}\right)}{m} -V(x^0, \boldu_{[0:q-1]}) \leq -\alpha(x^0,\boldu_{[0:q-1]}),
\end{align}
where $\sigma_m, \ldots, \sigma_1 \in \real_{\geq 0}$ satisfy~\eqref{eqn:sigma_ave}.
\end{enumerate}
\end{definition}
In contrast to the standard case, the interpretation of Definition~\ref{def:ho-dclf} is that the sequence of Lyapunov function values decreases not necessarily at every step, but on average every $m$ steps (compare with condition~\eqref{eqn:dclf_orderm}). Condition~\eqref{eqn:dclf_orderm} will provide a mean for relaxing the assumptions on the terminal conditions that are frequently imposed in standard MPC schemes.

Definition~\ref{def:ho-dclf} allows for a \hodclf~$V$ to depend on a control sequence $\boldu_{[0:q-1]}$. The variable $q$ determines how many components of the control sequence are considered. If $q=0$, the Lyapunov function only explicitly depends on the state. If $q=1$, the running cost of the optimal control problem could be taken as the Lyapunov function, e.g. $V(x, \boldu_{[0:0]}) = V(x, u) = \|x\|^2 + \|u\|^2$. If $q=N_p$, then the objective function of the optimal control problem could be taken as the Lyapunov function.

In the context of \emph{control} Lyapunov functions, the so-called small control property~\cite{ES:89} is often used. A generalization suitable for our framework is given below.

\begin{assumption}[Small Control Property]\label{assump:small_control_prop}
Con\-sid\-er system~\eqref{eqn:control_system} and let $V: \mathbb{R}^n \times \mathbb{R}^{p, q} \to \mathbb{R} $ be a \hodclf~of order $m$. We say that the small control property holds if for all $\varepsilon>0$, there exists $\delta>0$ such that the following conditions hold
\begin{enumerate}[i)]
    \item for all $0<\|x^{0}\|<\delta$, there exist $\boldu_{[0:q-1]} \in \mathbf{U}_{[0:q-1]}(x^{0})$ and $\boldnu_{[0:q+m-1]}$ with $\boldnu_{[l:q+l-1]} \in \mathbf{U}_{[0:q-1]}(x^{l})$ for every $l \in \{0,1,\dots,m\}$; 
    \item the controls $\boldu_{[0:q-1]}$ and $\boldnu_{[0:q+m-1]}$ satisfy $\max\{\|\boldu_{[0:q-1]}\|,\|\boldnu_{[0:q+m-1]}\|\}<\varepsilon$;
    \item the inequalities~\eqref{eqn:cond_V_alpha} and~\eqref{eqn:dclf_orderm} hold with $\boldu_{[0:q-1]}$ and $\boldnu_{[0:q+m-1]}$.
\end{enumerate}
\end{assumption}

The following results demonstrate how a Lyapunov function in the sense of Definition~\ref{def:ho-dclf} leads to stability.
\begin{theorem}\label{thm:ho-dclf-stab}
If there exists a \hodclf~of order $m$ for the control system~\eqref{eqn:control_system} and Assumption~\ref{assump:small_control_prop} holds, then for any $k_0\in \mathbb{N}$ and any state $\inistatetimev \in X$, there exists a feasible control sequence such that $\lim_{k \to \infty}x^k=0$.
\end{theorem}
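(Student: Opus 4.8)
The plan is to extract from the \hodclf~a quantitative ``one descent per block'' estimate, run a standard Lyapunov/summability argument along a subsequence of checkpoint states, and finally upgrade convergence of this subsequence to convergence of the whole trajectory.

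First I would isolate the key averaging estimate. Fix $x^0$ (I treat $q=0$; the case $q\neq0$ is identical after augmenting the state with the relevant control tail). Let $\boldnu_{[0:m-1]}$ be the feasible sequence furnished by condition (ii), producing $x^1,\dots,x^m$, and write $V_{\min}:=\min_{1\le i\le m}V(x^i)$. Since $V\ge0$ by positive definiteness and $\tfrac1m\sum_i\sigma_i\ge1$ by~\eqref{eqn:sigma_ave}, nonnegativity of the $\sigma_i$ gives $\tfrac1m\sum_i\sigma_i V(x^i)\ge V_{\min}\cdot\tfrac1m\sum_i\sigma_i\ge V_{\min}$; combining this with~\eqref{eqn:adc_q0} yields $V_{\min}\le V(x^0)-\alpha(x^0)$. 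This both recovers the claimed existence of a strictly smaller future value (when $x^0\neq0$, so $\alpha(x^0)>0$) and, crucially, controls the \emph{size} of the drop by $\alpha$.

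Next I would build the control sequence together with the checkpoints. Set $y_0:=\inistatetimev$. Given $y_j$, invoke the \hodclf~to obtain a feasible $m$-step block, let $x^{i^\ast_j}$ be the block state attaining $V_{\min}$, implement the block's controls up to index $i^\ast_j$, and set $y_{j+1}:=x^{i^\ast_j}$. Concatenating these prefixes defines one feasible infinite control sequence, feasibility being inherited from $\mathbf{U}_{[0:m-1]}$ in each block, and the estimate above reads $V(y_{j+1})\le V(y_j)-\alpha(y_j)$. Hence $\{V(y_j)\}$ is nonincreasing and bounded below by $0$, so it converges, and telescoping gives $\sum_j\alpha(y_j)\le V(y_0)<\infty$, whence $\alpha(y_j)\to0$. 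Because $V\ge\alpha$ and $\alpha$ is radially unbounded, $\alpha(y_j)\le V(y_j)\le V(y_0)$ keeps $\{y_j\}$ bounded, and then continuity and positive definiteness of $\alpha$ force every limit point of $\{y_j\}$ to be $0$, so $y_j\to0$. For $q\neq0$ the same reasoning applied to the augmented variable $(x^0,\boldu_{[0:q-1]})$ via~\eqref{eqn:cond_V_alpha} and~\eqref{eqn:dclf_orderm} yields convergence of the augmented checkpoints to $(0,\mathbf{0})$, hence of their state part to $0$.

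The main obstacle is upgrading $y_j\to0$ to $\lim_{k\to\infty}x^k=0$ along the \emph{actual} trajectory, since between consecutive checkpoints one implements up to $m-1$ intermediate states whose Lyapunov values need not be pinned down by~\eqref{eqn:adc_q0}; indeed indices $i$ with $\sigma_i=0$ are invisible to the averaging estimate. I would handle this by a continuity-and-confinement argument: for large $j$ every block starts arbitrarily close to the origin, and using continuity of $f$ at $(0,0)$ with the feasibility constraints one shows that the finitely many intermediate states of a block emanating from a sufficiently small neighborhood stay in a prescribed $\varepsilon$-ball, which together with $y_j\to0$ delivers $x^k\to0$. Making this uniform over all blocks is the delicate point and is where I expect the bulk of the technical work; a clean way to package it is to construct an auxiliary, genuinely monotone function (for instance the running maximum of $V$ over the current block along the chosen controls) and invoke the recalled characterization of attractivity for the time-varying closed loop~\eqref{eqn:time-var-control-system} to conclude.
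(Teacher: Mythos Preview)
Your derivation of the one-descent-per-block estimate and of checkpoint convergence $y_j\to0$ is correct and is essentially what the paper does, only packaged differently. The paper proves the minimum estimate (its Proposition~\ref{thm:ex_subsqn_stability}) by contradiction, whereas you obtain $V_{\min}\le V(x^0)-\alpha(x^0)$ directly from $\sigma_i\ge0$, $\tfrac1m\sum_i\sigma_i\ge1$ and $V\ge0$; and the paper gets checkpoint convergence by a compact-set/uniform-drop argument (assume the decreasing sequence $V(\boldsymbol\omega^{k_n})$ has limit $\delta>0$, confine the checkpoints to $C=\{\delta\le V\le V(\boldsymbol\omega^{k_0})\}$, set $-S=\max_C(-\alpha)<0$, telescope to $V(\boldsymbol\omega^{k_N})<0$), whereas you use summability $\sum_j\alpha(y_j)\le V(y_0)$ together with boundedness from $\alpha\le V$ and radial unboundedness of $\alpha$. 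These are interchangeable.

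Where you and the paper diverge is exactly the step you flag. The paper does not try to control the intermediate states at all: it runs the entire proof by contradiction from the outset (``suppose no feasible control achieves $\lim_k x^k=0$''), observes that along the constructed control this forces $x^k\neq0$ for every $k$ (since $f(0,0)=0$), and then derives its contradiction purely along the checkpoint subsequence. In that framing the states between checkpoints never enter the argument. (One may object that the paper's inference ``$x^{k_n}\neq0\Rightarrow\delta>0$'' is itself too quick, so the contradiction route is not obviously airtight either; but that is the structure the paper adopts.)

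Your proposed direct fix, by contrast, has a genuine gap: continuity of $f$ at $(0,0)$ confines the intermediate states only if the applied controls $\nu_i$ are also small, and nothing in the $q=0$ definition of a \hodclf~forces that---this is precisely the role of the small control property (Assumption~\ref{assump:small_control_prop}), which the paper invokes only for Theorem~\ref{thm:ho-dclf_lyapstab}, not here. When some $\sigma_i=0$ the \adc~places no constraint on $V(x^i)$, so a block starting arbitrarily near $0$ can legitimately use large controls and produce a large intermediate state while still satisfying~\eqref{eqn:adc_q0}; your auxiliary running-maximum function faces the same obstruction. To close the argument you should either adopt the paper's global contradiction structure, or add a hypothesis such as all $\sigma_i>0$ or a selection of $\boldnu$ that is small near the origin.
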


\begin{theorem}\label{thm:ho-dclf_lyapstab}
Let $V: \mathbb{R}^n \times \mathbb{R}^{p, q} \to \mathbb{R}$ be a \hodclf~of order $m$ for the control system~\eqref{eqn:control_system} with positive weights $\sigma_i$ and suppose that Assumption~\ref{assump:small_control_prop} holds. Then there exists a control strategy which renders the origin asymptotically stable.  
\end{theorem}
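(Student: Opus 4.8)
The plan is to separate the two defining properties of asymptotic stability. Convergence to the origin has already been established in Theorem~\ref{thm:ho-dclf-stab}, so the substance of the argument is to establish Lyapunov stability, i.e. the $\varepsilon$--$\delta$ property, and then to check that both can be realized by a single admissible control strategy. That strategy is the ``implement until a descent occurs'' rule underlying the whole paper: starting from the current (augmented) state, invoke condition ii) or ii') of Definition~\ref{def:ho-dclf} to obtain the sequence $\boldnu$ realizing the average decrease~\eqref{eqn:dclf_orderm}; as in the proof of Theorem~\ref{thm:ho-dclf-stab}, the weight normalization~\eqref{eqn:sigma_ave} together with positive definiteness of $\alpha$ forces at least one index $j \in \{1,\dots,m\}$ with a strictly smaller value of $V$, and one implements the inputs up to that step. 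This produces decrease times $k_0 = t_0 < t_1 < \cdots$ with $t_{l+1}-t_l \le m$ along which the values of $V$ are strictly decreasing; moreover the tail $\boldnu_{[j:q+j-1]}$ supplies a feasible control window at $t_{l+1}$, so the rule is well defined for all time (recursive feasibility of the windows).

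The decisive quantitative estimate is a bound on the overshoot of $V$ between consecutive decrease times, and this is exactly where strict positivity of the weights enters. Writing $\sigma_{\min} = \min_{1 \le i \le m}\sigma_i > 0$ and dropping the nonpositive term $-\alpha \le 0$ from~\eqref{eqn:dclf_orderm}, for each intermediate index $j$ one keeps a single summand on the left and estimates $\sigma_j\,V(x^{t_l+j},\cdot) \le \sum_{i=1}^m \sigma_i\,V(x^{t_l+i},\cdot) \le m\,V(x^{t_l},\cdot)$, whence $V(x^{t_l+j},\cdot) \le (m/\sigma_{\min})\,V(x^{t_l},\cdot)$. Since the values of $V$ at the decrease times are non-increasing, every value of $V$ visited along the implemented trajectory on $[k_0,\infty)$ is bounded by $C\,V(x^{k_0},\boldu^{(0)})$ with $C = \max\{1,\,m/\sigma_{\min}\}$, where $\boldu^{(0)}$ denotes the initial control window (absent when $q=0$). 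Thus a single bound on the initial value of $V$ controls the entire future trajectory.

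It remains to run the standard $\varepsilon$--$\delta$ chain through this bound. Given $\varepsilon>0$, because $V \ge \alpha$ by~\eqref{eqn:cond_V_alpha} and $\alpha$ is continuous, positive definite and radially unbounded, its sublevel sets are compact and shrink to $\{0\}$, so there is $\eta>0$ with $\alpha(x,\boldu)<\eta \Rightarrow \|x\|<\varepsilon$; consequently it suffices to keep $V<\eta$ along the trajectory, and by the previous paragraph this holds once $V(x^{k_0},\boldu^{(0)}) < \eta/C$. One finally makes the initial value of $V$ this small: for $q=0$ this is immediate from continuity of $V$ and $V(0)=0$; for $q\neq 0$ this is precisely the role of Assumption~\ref{assump:small_control_prop}, which for $\|x^{k_0}\|$ small enough furnishes an admissible, arbitrarily small initial window $\boldu^{(0)}$ (and the accompanying $\boldnu$) satisfying~\eqref{eqn:cond_V_alpha}--\eqref{eqn:dclf_orderm}, so that continuity of $V$ at $(0,\mathbf{0})$ makes $V(x^{k_0},\boldu^{(0)})$ as small as required. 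Choosing $\delta>0$ to realize these requirements yields the $\varepsilon$--$\delta$ property; combined with the convergence from Theorem~\ref{thm:ho-dclf-stab}, obtained by the same rule, the origin is asymptotically stable.

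I expect the main obstacle to be the bookkeeping that makes the overshoot estimate and the $\varepsilon$--$\delta$ chain interlock cleanly in the control-dependent case $q\neq 0$: one must track the augmented state $(x,\boldu_{[\cdot]})$ rather than $x$ alone, verify at every decrease time that the window handed forward by $\boldnu$ is feasible and keeps $V$ within the computed bound, and apply the small control property only at the initial time while letting the positive-weight estimate propagate smallness thereafter. The strict positivity of the $\sigma_i$ is essential: with a vanishing weight the corresponding intermediate value of $V$ could not be controlled from the averaged inequality, and the overshoot---hence Lyapunov stability---could fail.
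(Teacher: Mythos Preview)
Your proposal is correct and follows essentially the same route as the paper: both arguments hinge on the overshoot bound $V(x^{t_l+j},\cdot)\le (m/\sigma_{\min})\,V(x^{t_l},\cdot)$ extracted from~\eqref{eqn:dclf_orderm} by dropping a single summand (the paper writes this out for $m=2$ as $V(x^{k_0},\boldu^{k_0}_{[0:q-1]})<\min\{0.5\beta\sigma_1,0.5\beta\sigma_2,\beta\}$), then close the $\varepsilon$--$\delta$ loop via continuity of $V$ at the origin together with Assumption~\ref{assump:small_control_prop} for $q\neq0$. The only cosmetic difference is that you pass through the sublevel sets of $\alpha$ (using $V\ge\alpha$) whereas the paper invokes the radial unboundedness of $V$ directly to trap the augmented state in $\Omega_\beta\subseteq B_r$; these are equivalent since~\eqref{eqn:cond_V_alpha} makes $V$ radially unbounded precisely because $\alpha$ is.
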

The proof of both results can be found in Section~\ref{sec:stability_proofs}.
It is worth connecting the above results to the existing literature. In particular, a corollary of Theorem~\ref{thm:ho-dclf-stab} is obtained regarding so-called finite-step control Lyapunov functions, recently introduced in~\cite{NN-RG-LG-FW:20}. By definition, finite-step control Lyapunov functions decrease after finitely many, e.g. $m$, steps. This means they satisfy~\eqref{eqn:adc_q0} with $\sigma_{m-1},\dots,\sigma_1=0$.
\begin{corollary}
If there exists a finite-step control Lyapunov function for the control system \eqref{eqn:control_system} and Assumption~\ref{assump:small_control_prop} holds, then for any initial state $x^0 \in X$, there exists a feasible control sequence such that $\lim_{k\to \infty}x^k=0$.
\end{corollary}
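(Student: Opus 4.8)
The plan is to show that a finite-step control Lyapunov function is nothing but a \hodclf~of order $m$ in the special case $q=0$ with a single nonzero weight, so that the conclusion follows immediately from Theorem~\ref{thm:ho-dclf-stab}. Accordingly, no new stability argument is needed; the work is entirely in reconciling the two definitions.

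First I would recall, from~\cite{NN-RG-LG-FW:20}, the defining properties of a finite-step control Lyapunov function: it is a continuous, positive definite $V:\mathbb{R}^n\to\mathbb{R}$ for which there is a continuous, radially unbounded, positive definite $\alpha$ with $V(x^0)-\alpha(x^0)\ge 0$, and for which, for each $x^0$, there exists a feasible control sequence $\boldnu_{[0:m-1]}\in\mathbf{U}_{[0:m-1]}(x^0)$ steering $x^0$ to some $x^m$ with $V(x^m)-V(x^0)\le-\alpha(x^0)$. The point the excerpt already makes is that such a $V$ satisfies~\eqref{eqn:adc_q0} with $\sigma_{m-1},\dots,\sigma_1=0$, so the whole task reduces to fixing the remaining weight so that the averaging constraint~\eqref{eqn:sigma_ave} is met.

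The key step is therefore to set $\sigma_m=m$ and $\sigma_{m-1}=\dots=\sigma_1=0$. With this choice, condition~\eqref{eqn:sigma_ave} reads $\tfrac{\sigma_m}{m}-1=0\ge 0$ and is satisfied, while the average-descent inequality~\eqref{eqn:adc_q0} collapses to
\begin{align*}
\frac{\sigma_m V(x^m)}{m}-V(x^0)=V(x^m)-V(x^0)\le-\alpha(x^0),
\end{align*}
which is exactly the finite-step decrease condition above. Condition (i) of the $q=0$ case of Definition~\ref{def:ho-dclf} is precisely the inequality $V-\alpha\ge 0$, and continuity and positive definiteness of $V$ are assumed. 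Hence $V$ is a \hodclf~of order $m$ with $q=0$. Invoking Theorem~\ref{thm:ho-dclf-stab} with $k_0=0$ then yields, for any $x^0\in X$, a feasible control sequence with $\lim_{k\to\infty}x^k=0$, which is the claim.

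The only genuine obstacle is a bookkeeping one: verifying that the formulation of finite-step control Lyapunov functions in~\cite{NN-RG-LG-FW:20} supplies a \emph{single} comparison function $\alpha$ that serves simultaneously in the lower bound $V\ge\alpha$ and in the decrease estimate, and that the steering controls there are feasible in the precise sense of~\eqref{eqn:setfeasc}. If the cited definition instead uses separate class-$\mathcal{K}_\infty$ bounds, I would first consolidate them into one continuous, radially unbounded, positive definite $\alpha$ (e.g.\ by taking a suitable lower envelope), after which the identification with Definition~\ref{def:ho-dclf} is immediate.
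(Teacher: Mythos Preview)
Your proposal is correct and matches the paper's own argument: the paper simply states that the corollary follows immediately because finite-step control Lyapunov functions are a special case of \hodclf s (namely $q=0$ with $\sigma_{m-1}=\dots=\sigma_1=0$), and then invokes Theorem~\ref{thm:ho-dclf-stab}. Your explicit identification $\sigma_m=m$ is exactly what is needed to make both~\eqref{eqn:adc_q0} and~\eqref{eqn:sigma_ave} hold, and your closing remark about reconciling the comparison functions from~\cite{NN-RG-LG-FW:20} is a fair caveat that the paper leaves implicit.
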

The proof of this corollary follows immediately, since the definition of \hodclf s~includes finite-step control Lyapunov functions. With these results at hand, we are ready to introduce our novel MPC scheme.
\subsection{Flexible-step MPC scheme}
The concept of \hodclf s is now utilized to present our MPC scheme. Recall that Definition~\ref{def:ho-dclf} only guarantees the existence of a $(q+m)$-long sequence $\boldnu_{[0:q+m-1]}$ satisfying~\eqref{eqn:dclf_orderm}. These
controls need to be found through an optimization problem. More precisely, the flexible-step MPC scheme is based on the
optimal control problem defined in Problem~\ref{prob:ho-dclf} below where the condition~\eqref{eqn:dclf_orderm} has been added as a constraint.
Since the fraction in \eqref{eqn:dclf_orderm} represents a weighted average, we will refer to this constraint as the \emph{average decrease constraint} (\adc). 
The \adc~constraint guarantees, that there exists an index of descent $\ld$, $1 \leq \ld \leq m$, at which a descent of the \hodclf~is achieved, that is $V(x^{*\ld}, \boldu^*_{[\ld:q+\ld-1]}) -V(x,\boldu^{*-}_{[0:q-1]})\leq -\alpha(x,\boldu^{*-}_{[0:q-1]})$. 
Here, $\boldu^{*-}_{[0:q-1]}$ denotes the $q$-long control strategy of the previous iteration or more precisely, $\boldu^{*-}_{[0:q-1]} := \boldu^*_{[\ld:q+\ld-1]}$, where $\boldu^*_{[\ld:q+\ld-1]}$ was the optimal control strategy with the index of descent $\ld$ of the \hodclf~in that previous iteration. 
%\newcounter{mytempeqncnt} 
%\begin{figure*}
% \setcounter{mytempeqncnt}{\value{equation}}
\begin{equation}\label{eq:display}
\hspace{-0.36cm}
    \begin{aligned}
    &\min \sum_{j=0}^{N_p -1} f_0(x^j,u^{j}) + \phi(x^{N_p})\\
    \ &\text{s.t. }  x^{j+1} = f(x^j,u^j), \ x^0 = x, \ \ u^j \in U,\,x^j \in X, \, x^{N_p} \in X^{N_p}, \\
    &\hphantom{\text{s.t. }}[u^l, \dots, u^{l+q-1}] \in \mathbf{U}_{[0:q-1]}(x^l) \text{ for } l=0,1,\dots,m\\
    &\hphantom{\text{s.t. }}\frac{1}{m}\Big(\sigma_{m}V(x^{m}, [u^m, \dots,u^{m-1+q}])
    + \sigma_{m-1}V(x^{m-1}, [u^{m-1}, \dots,u^{m-2+q}])
    + \ldots 
    +\sigma_1V(x^{1}, [u^1, \dots,u^{q}])\hspace{-0.05cm}\Big)\\
    &\hphantom{\text{s.t. }\frac{1}{m}\Big(} - V(x^{0},\boldu^{*-}_{[0:q-1]}) \leq -\alpha(x^0,\boldu^{*-}_{[0:q-1]})
    \end{aligned}
\end{equation}
%ion}{\value{mytempeqncnt}}
%    \hrulefill
%\end{figure*}
\begin{problem}\label{prob:ho-dclf}
Choose the following parameters a-priori: $N_p \in \mathbb{N}, q \leq N_p$ with $q \in \mathbb{N}, m \leq N_p$ with $m \in \mathbb{N}_{>0}, X^{N_p} \subseteq X \subseteq \mathbb{R}^n$ with $0 \in \interior(X^{N_p}),\sigma_m, \ldots, \sigma_1 \in \mathbb{R}_{\geq 0}$ satisfying~\eqref{eqn:sigma_ave}, a positive semi-definite function $\phi: \mathbb{R}^n \to \mathbb{R}$ and a \hodclf~$V: \real^n \times \real^{p,q} \to \real$ of order $m$. Solve the finite-horizon optimal control problem~\eqref{eq:display}, where  $f_0: \mathbb{R}^n\times\mathbb{R}^p\to \mathbb{R}$ is positive definite and $0 \in \interior(U)$, $U \subseteq \mathbb{R}^p$. Here $x=x(k)$ is the current state and $\boldu^{*-}_{[0:q-1]}$ is the previous control strategy. The optimization is done over the $N = \max\{q+m, N_p\}$ control inputs $[u^0,u^{1}, \dots, u^{N-1}]$.
\end{problem}
Similar to prior literature and for the sake of simplicity, we will assume from now on that a bounded solution to Problem~\ref{prob:ho-dclf} exists, this should not be confused with recursive feasibility for which we provide results later on. 

Problem~\ref{prob:ho-dclf} is written for the case $q\neq0$; like in Definition~\ref{def:ho-dclf}, this can be adjusted for the case $q=0$. To be precise, in equation~\eqref{eq:display} the constraint involving the feasible set of controls needs to read $[u^0, \dots, u^{m-1}] \in \mathbf{U}_{[0:m-1]}(x^0)$. This constraint is already covered by $x^j \in X$ and $u^j \in U$ and hence does not have to be included in the case of $q=0$. The \adc~constraint becomes $\frac{1}{m}(\sum_{j=1}^mV(x^j)) - V(x^0) \leq - \alpha(x^0)$. From now on, we leave it to the reader to make the necessary adjustments for the case $q = 0$.

The flexible-step MPC scheme is given in Algorithm~\ref{algo:ho-dclf}. Note that Problem~\ref{prob:ho-dclf} needs to be solved as part of this algorithm. Like in any MPC scheme, any suitable solver for this nonlinear constrained optimization problem can be used.

\begin{algorithm}[ht]
\caption{Flexible-step MPC scheme}
\label{algo:ho-dclf}
\begin{algorithmic}[1]
\STATE set $k=\initimetimev$, measure the initial state $x(k_0)$ and choose an arbitrary $\boldu^{*-}_{[0:q-1]} \in \mathbf{U}_{[0:q-1]}(x(k_0))$
\STATE measure the current state $x(k)$ of~\eqref{eqn:control_system}
\STATE solve Problem~\ref{prob:ho-dclf} with $x=x(k)$ and obtain the optimal input ${\mathbf{u}^{*}_{[0:N-1]}}$, where $N=\max\{q+m,N_p\}$
\STATE choose an index $1\leq\ld\leq m$ for which $V(x^{*\ld},\mathbf{u}^*_{[\ld:q+\ld-1]}) - V(x,\boldu^{*-}_{[0:q-1]}) \leq -
    \alpha(x,\boldu^{*-}_{[0:q-1]})$ 
\STATE implement $\boldu^*_{[0:\ld-1]}=:[c^{k}_{mpc},\dots, c^{k+\ell_{decr}-1}_{mpc}]$ and redefine $\boldu^{*-}_{[0:q-1]}:=\boldu^{*}_{[\ld:q+\ld-1]}$ 
\STATE increase $k:=k+\ld$ and go to 2
\end{algorithmic}
\end{algorithm}

We find it useful to explain the idea behind our implementation with an example, illustrated in Figure~\ref{fig:expl_new_mpcimpl}. 
\begin{figure*}
\begin{subfigure}[b]{0.242\textwidth}
\begin{tikzpicture}[scale=0.45][x=0.5cm, y=0.5cm,domain=0:11,smooth]
   %Raster zeichnen
   \draw [color=gray!30]  [step=5mm] (-0.3,-0.5) grid (8.5,12);
   % Achsen zeichnen
   \draw[->,thick] (0,0) -- (8,0) node[right] {$k$};
   \draw[->,thick] (0,0) -- (0,11) node[above] {$V$};
   % Achsen beschriften
   \foreach \c in {1,2,...,7}{
     \draw (\c,-.1) -- (\c,.1) ;
   }
    %Funktionswerte zeichnen
    \filldraw[LightGreen] (0,9) circle (8pt);
    \filldraw[DarkBlue] (0,9) circle (3pt) node[anchor=west] {$V(x^0)$}; 
    \draw[DarkBlue] (0,9) -- (1,8);
    \filldraw[DarkBlue] (1,8) circle (3pt) node[anchor=west] {$V(x^1)$};
    \draw[DarkBlue] (1,8) -- (2,6);
    \filldraw[DarkBlue] (2,6) circle (3pt) node[anchor=west] {$V(x^2)$};
    \draw[DarkBlue,dashed] (2,6) -- (3,7.5);
    \filldraw[DarkBlue] (3,7.5) circle (3pt) node[anchor=west] {$V(x^3)$};
    \draw[DarkBlue,dashed] (3,7.5) -- (4,10);
    \filldraw[DarkBlue] (4,10) circle (3pt) node[anchor=west] {$V(x^4)$};
\end{tikzpicture}
\caption{Optimization at ${k=0}$ \hphantom{platzhalter}}
\label{fig:expl_new_mpcimpl1}
\end{subfigure}
\begin{subfigure}[b]{0.242\textwidth}
\begin{tikzpicture}[scale=0.45][x=0.5cm, y=0.5cm,domain=0:11,smooth]
   %Raster zeichnen
   \draw [color=gray!30]  [step=5mm] (-0.3,-0.5) grid (8.5,12);
   % Achsen zeichnen
   \draw[->,thick] (0,0) -- (8,0) node[right] {$k$};
   \draw[->,thick] (0,0) -- (0,11) node[above] {$V$};
   % Achsen beschriften
   \foreach \c in {1,2,...,7}{
     \draw (\c,-.1) -- (\c,.1) ;
   }
    %Funktionswerte zeichnen
    \filldraw[gray!40] (0,9) circle (8pt);
    \filldraw[black!70] (0,9) circle (3pt);
    \draw[black!50] (0,9) -- (1,8);
    \filldraw[black!70] (1,8) circle (3pt);
    \draw[black!50] (1,8) -- (2,6);
    \filldraw[LightGreen] (2,6) circle (8pt);
    \filldraw[LightBlue] (2,6) circle (3pt) node[anchor=south] {$V(x^0)$};
    \draw[black!50,dashed] (2,6) -- (3,7.5);
    \filldraw[black!70] (3,7.5) circle (3pt);
    \draw[black!50,dashed] (3,7.5) -- (4,10);
    \filldraw[black!70] (4,10) circle (3pt);
    \filldraw[LightBlue] (3,3) circle (3pt) node[anchor=west] {$V(x^1)$};
    \draw[LightBlue] (2,6) -- (3,3);
    \filldraw[LightBlue] (4,6.4) circle (3pt) node[anchor=west] {$V(x^2)$};
    \draw[LightBlue,dashed] (3,3) -- (4,6.4);
    \filldraw[LightBlue] (5,8.5) circle (3pt) node[anchor=west] {$V(x^3)$};
    \draw[LightBlue,dashed] (4,6.4) -- (5,8.5);
    \filldraw[LightBlue] (6,10.5) circle (3pt) node[anchor=west] {$V(x^4)$};
    \draw[LightBlue,dashed] (5,8.5) -- (6,10.5);
\end{tikzpicture}
\caption{Optimization at ${k=2}$ \hphantom{platzhalter}}
\label{fig:expl_new_mpcimpl2}
\end{subfigure}
\begin{subfigure}[b]{0.242\textwidth}
\begin{tikzpicture}[scale=0.45][x=0.5cm, y=0.5cm,domain=0:11,smooth]
   %Raster zeichnen
   \draw [color=gray!30]  [step=5mm] (-0.3,-0.5) grid (8.5,12);
   % Achsen zeichnen
   \draw[->,thick] (0,0) -- (8,0) node[right] {$k$};
   \draw[->,thick] (0,0) -- (0,11) node[above] {$V$};
   % Achsen beschriften
   \foreach \c in {1,2,...,7}{
     \draw (\c,-.1) -- (\c,.1) ;
   }
    %Funktionswerte zeichnen
    \filldraw[gray!40] (0,9) circle (8pt);
    \filldraw[black!70] (0,9) circle (3pt);
    \draw[black!50] (0,9) -- (1,8);
    \filldraw[black!70] (1,8) circle (3pt);
    \draw[black!50] (1,8) -- (2,6);
    \filldraw[gray!40] (2,6) circle (8pt);
    \filldraw[black!70] (2,6) circle (3pt);
    \draw[black!50, dashed] (2,6) -- (3,7.5);
    \filldraw[black!70] (3,7.5) circle (3pt);
    \draw[black!50, dashed] (3,7.5) -- (4,10);
    \filldraw[black!70] (4,10) circle (3pt);
    \filldraw[LightGreen] (3,3) circle (8pt);
    \filldraw[DarkGreen] (3,3) circle (3pt) node[anchor=west] {$V(x^0)$};
    \draw[black!50] (2,6) -- (3,3);
    \filldraw[black!70] (4,6.4) circle (3pt);
    \draw[black!50,dashed] (3,3) -- (4,6.4);
    \filldraw[black!70] (5,8.5) circle (3pt);
    \draw[black!50,dashed] (4,6.4) -- (5,8.5);
    \filldraw[black!70] (6,10.5) circle (3pt);
    \draw[black!50,dashed] (5,8.5) -- (6,10.5);
    \draw[DarkGreen] (3,3) -- (4,4);
    \filldraw[DarkGreen] (4,4) circle (3pt) node[anchor=west] {$V(x^1)$};
    \filldraw[DarkGreen] (5,5) circle (3pt) node[anchor=east] {$V(x^2)$};
    \draw[DarkGreen] (4,4) -- (5,5);
    \draw[DarkGreen] (5,5) -- (6,6);
    \filldraw[DarkGreen] (6,6) circle (3pt) node[anchor=west] {$V(x^3)$};
    \draw[DarkGreen] (6,6) -- (7,1);
    \filldraw[DarkGreen] (7,1) circle (3pt) node[anchor=east] {$V(x^4)$};
\end{tikzpicture}
\caption{Optimization at ${k=3}$ \hphantom{platzhalter}}
\label{fig:expl_new_mpcimpl3}
\end{subfigure}
\begin{subfigure}[b]{0.242\textwidth}
\begin{tikzpicture}[scale=0.45][x=0.5cm, y=0.5cm,domain=0:11,smooth]
   %Raster zeichnen
   \draw [color=gray!30]  [step=5mm] (-0.3,-0.5) grid (8.5,12);
   % Achsen zeichnen
   \draw[->,thick] (0,0) -- (8,0) node[right] {$k$};
   \draw[->,thick] (0,0) -- (0,11) node[above] {$V$};
   % Achsen beschriften
   \foreach \c in {1,2,...,7}{
     \draw (\c,-.1) -- (\c,.1) ;
   }
    %Funktionswerte zeichnen
    \draw[yellow!70,line width=1.1mm] (0,9) -- (1,8);
    \draw[yellow!70,line width=1.1mm] (1,8) -- (2,6);
    \draw[yellow!70,line width=1.1mm] (2,6) -- (3,3);
    \draw[yellow!70,line width=1.1mm] (3,3) -- (4,4);
    \draw[yellow!70,line width=1.1mm] (4,4) -- (5,5);
    \draw[yellow!70,line width=1.1mm] (5,5) -- (6,6);
    \draw[yellow!70,line width=1.1mm] (6,6) -- (7,1);
    \filldraw[LightGreen] (0,9) circle (8pt);
    \filldraw[LightGreen] (2,6) circle (8pt);
    \filldraw[LightGreen] (3,3) circle (8pt);
    \filldraw[LightGreen] (7,1) circle (8pt);
    \filldraw[black] (0,9) circle (3pt) node[anchor=west] {$V(x(0))$};
    \draw[black] (0,9) -- (1,8);
    \filldraw[black] (1,8) circle (3pt) node[anchor=west] {$V(x(1))$};
    \draw[black] (1,8) -- (2,6);
    \filldraw[black] (2,6) circle (3pt) node[anchor=west] {$V(x(2))$};
    \filldraw[black] (3,3) circle (3pt) node[anchor=west] {$V(x(3))$};
    \draw[black] (2,6) -- (3,3);
    \draw[black] (3,3) -- (4,4);
    \filldraw[black] (4,4) circle (3pt) node[anchor=west] {$V(x(4))$};
    \filldraw[black] (5,5) circle (3pt) node[anchor=east] {$V(x(5))$};
    \draw[black] (4,4) -- (5,5);
    \draw[black] (5,5) -- (6,6);
    \filldraw[black] (6,6) circle (3pt) node[anchor=west] {$V(x(6))$};
    \draw[black] (6,6) -- (7,1);
    \filldraw[black] (7,1) circle (3pt) node[anchor=east] {$V(x(7))$};
\end{tikzpicture}
\caption{Lyapunov function values along actual states}
\label{fig:final_trajectory_expl_mpc_newimpl}
\end{subfigure}
\caption{Illustration of proposed MPC scheme: Consider Problem~\ref{prob:ho-dclf} with $m=4$ and $q=0$. The initial state is ${\color{DarkBlue}x^0}=x(0)$, whose Lyapunov function value is depicted in (A) and highlighted in green. After solving the finite-horizon optimal control problem, we obtain four predicted states and their corresponding Lyapunov function values ({\color{DarkBlue}$V(x^0),V(x^1),V(x^2),V(x^3),V(x^4)$}). Since there are multiple time indices for which the Lyapunov function decreases, we choose $\ld$ here as the index where the greatest descent occurs. We implement $\ld=2$ components of the control sequence and, consequently, declare {\color{DarkBlue}$x^2$} as the new initial state for the finite-horizon optimal control problem at time $k=2$. 
This problem is solved in (B) and we obtain again four states and their corresponding Lyapunov function values ({\color{LightBlue}$V(x^0),V(x^1),V(x^2),V(x^3),V(x^4)$}). We repeat the scheme and after solving Problem~\ref{prob:ho-dclf} three times, we obtain the trajectory of the closed-loop states and their Lyapunov function values shown in (D).}
\label{fig:expl_new_mpcimpl}
\end{figure*}
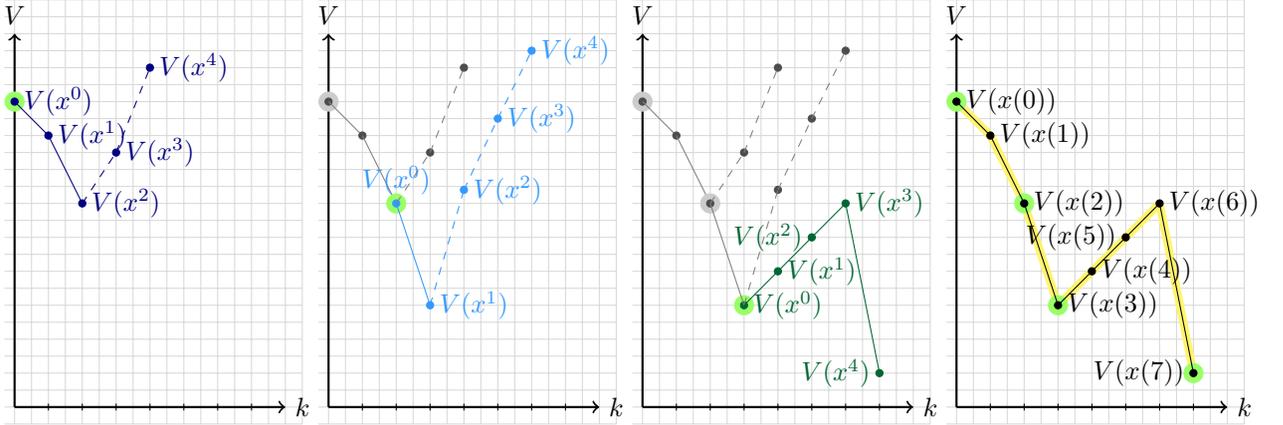
Consider Problem~\ref{prob:ho-dclf} with $m=4$ and a \hodclf~$V$ with $q=0$. Let $x^0=x(0)$ be the initial state. After solving Problem~\ref{prob:ho-dclf}, we obtain a trajectory of states ($x^0,x^1,x^2,x^3,x^4$). The corresponding Lyapunov function values are shown in Figure \ref{fig:expl_new_mpcimpl1} in dark blue. By definition of \adc, we know that at least one of the four future Lyapunov function values will be strictly less than $V(x^0)$ (see Proposition~\ref{thm:ex_subsqn_stability}). In order to ultimately reach convergence of the state to zero, we need to implement the optimal control sequence until a descent of the Lyapunov function value occurs. In this example, we choose to implement until the greatest descent occurs. In Figure~\ref{fig:expl_new_mpcimpl1}, this is achieved at state $x^2$, thus, we implement the optimal control sequence for two time steps and declare the actual states $x(1)=x^1, x(2)=x^2$. It is worthwhile to mention that in Figure~\ref{fig:expl_new_mpcimpl1}, the predicted Lyapunov function value after the fixed amount of four steps is strictly greater than the Lyapunov function value at the beginning of the prediction, i.e. $V(x^4) > V(x^0)$. This would not be feasible in an MPC scheme based on a finite-step Lyapunov decay condition. Now the new initial state for the finite-horizon optimal control problem becomes $x(2)$, highlighted in light green. We repeat this process, as demonstrated in Figure~\ref{fig:expl_new_mpcimpl2}. The greatest descent is achieved after one time step. Thus, we only implement the optimal control sequence for one step, $x(3)=x^1$. We define the new initial state and proceed as before. This time, the greatest descent is achieved after four time steps, see Figure~\ref{fig:expl_new_mpcimpl3}. The yellow trajectory in Figure \ref{fig:final_trajectory_expl_mpc_newimpl} shows the Lyapunov function values along the actual states of our example after three iterations of Algorithm~\ref{algo:ho-dclf}.

At this point, we would like to clarify steps four and five of Algorithm \ref{algo:ho-dclf}. By Proposition~\ref{thm:ex_subsqn_stability}, \adc~guarantees a descent of the Lyapunov function value, step four captures where it occurs through the index $\ld$ and in step five the control sequence is implemented until the descent.  
The term \textit{flexible-step} MPC is chosen to emphasize that steps four and five lead to implementation of possibly \textit{different} number of steps in each iteration. Hence, we have arrived at a flexible-step implementation without adding an integer decision variable to Problem~\ref{prob:ho-dclf}. Adding instead the constraint \adc, has enabled us to easily determine the index of descent after the optimization, as it is stated in Algorithm 1.

\begin{remark}
{\em
Algorithm~\ref{algo:ho-dclf} generates a feedback law $c_{mpc}^k(x(\pi(k)),\boldu^{*-}(\pi(k)))$ by concatenating a \emph{flexible} amount of components of the optimal solution $\boldu^*$ to Problem~\ref{prob:ho-dclf}. This feedback law is time and state dependent, common in continuous-time sampled-data MPC~\cite{FF-LM-EG:07,FF:03}. Here, $\pi(k)$ is the time step of the most recent state measurement, i.e. the beginning of each optimization instance of Problem~\ref{prob:ho-dclf} (step three in Algorithm~\ref{algo:ho-dclf}) and $\boldu^{*-}(\pi(k))$ is part of the control strategy of the previous iteration (assigned in step five of the algorithm). The novel implementation in Algorithm~\ref{algo:ho-dclf} naturally leads to a time-varying feedback law, and hence system, because the actual states generated by the flexible-step MPC scheme evolve according to the dynamics 
\begin{align}\label{eqn:flexstepdynamics}
\begin{aligned}
    x(k+1) &= f\big(x(k),c_{mpc}^k(x(\pi(k)),\boldu^{*-}(\pi(k)))\big),
\end{aligned}
\end{align}
for $k \geq k_0$. 
Alternatively, we can extend system~\eqref{eqn:flexstepdynamics} by an augmented variable $y(k) \in \mathbb{R}^n\times \mathbb{R}^{p,q}$, which captures the state and $\boldu^{*-}$ at the optimization instances, and allows us to write the closed-loop dynamics as follows 
\begin{align}\label{eqn:relyx}
    &x(k+1) = f(x(k),c^k_{mpc}\big(y(k))\big)\nonumber\\
    &y(k+1) = \left\{\begin{array}{ll} \Big(f\big(x(k),c^k_{mpc}(y(k))\big),\boldu^{*-}(k+1)\Big), &\text{if optimization instance occurs at } k+1\\
    y(k), & \text{otherwise}\end{array}\right.
\end{align}
with $y(k_0)=(x(k_0),\boldu^{*-}(k_0))$. In the previous example, illustrated in Figure~\ref{fig:expl_new_mpcimpl}, the optimization instances occur at $k=0,2,3,7$, thus,
\begin{align*}
    [y(0),y(1), \dots ,y(7)]= &[(x(0),\boldu^{*-}(0)), (x(0),\boldu^{*-}(0)), (x(2),\boldu^{*-}(2)),(x(3),\boldu^{*-}(3)), (x(3),\boldu^{*-}(3)), \\
    &\ (x(3),\boldu^{*-}(3)), (x(3),\boldu^{*-}(3)), (x(7),\boldu^{*-}(7))].
\end{align*}
The first component of $y(k)$ is determined by $x(k)$ through~\eqref{eqn:relyx} and the second component captures $\boldu^{*-}$. Since both components will converge to zero if $x(k)$ converges to zero (the zero control is a feasible control with minimal cost for Problem~\ref{prob:ho-dclf}) and since the state extension leads to a cumbersome formulation, we consider only the state variable $x(k)$ in the following. 
\oprocend
}
\end{remark}

\begin{remark}\label{rkm:changeustarminus}
\em{
The way we assign $\boldu^{*-}(\pi(k))$ in Algorithm~\ref{algo:ho-dclf} allows us to guarantee a chain of inequalities, similar to~\eqref{eqn:Vstrictineqs}, needed for the stability proof. It is possible to change this assignment while still guaranteeing the chain of inequalities. In fact, if we change step five in Algorithm~\ref{algo:ho-dclf} to: implement $\boldu^*_{[0:\ld-1]}=:[c^{k}_{mpc},\dots, c^{k+\ell_{decr}-1}_{mpc}]$, find $\bar \boldu_{[0:q-1]} \in \mathbf{U}_{[0:q-1]}(x^{*\ld})$ such that 
\begin{align*}
    V(x^{*\ld},\bar \boldu_{[0:q-1]}) - V(x,\boldu^{*-}_{[0:q-1]}) \leq -
    \alpha(x,\boldu^{*-}_{[0:q-1]})
\end{align*}
and redefine $\boldu^{*-}_{[0:q-1]}:=\bar \boldu_{[0:q-1]}$, then the argument of the stability proof still applies.  
\oprocend
}
\end{remark}
We next state a sequence of results related to Algorithm~\ref{algo:ho-dclf}. We start with the following assumption. It is a generalization of the usual assumption made in standard MPC, i.e. the existence of a feasible controller which renders the set $X^{N_p}$ invariant and achieves a descent~\cite{DM-JR-CR-PS:00, JR-DM-MD:17}. 
\begin{assumption}\label{assump:loc_feedback_recfeas}
Consider the sets $U\subseteq \mathbb{R}^p, X^{N_p}\subseteq X \subseteq \mathbb{R}^n$ with $0 \in  \interior(U)$ and $0 \in \interior(X^{N_p})$. 
Let $V: \mathbb{R}^n \times \mathbb{R}^{p,N_p} \to \mathbb{R}$ be a \hodclf~of order $m$ for system~\eqref{eqn:control_system}. Assume that for any $(x^0, \boldu_{[0:N_p-1]}) \in X \times \mathbf{U}_{[0:N_p-1]}(x^0)$ there exists a feedback $\mathbf{c}: X^{N_p} \to \mathbb{R}^{p,m}, \tilde x \mapsto \mathbf{c}(\tilde x)$ such that for all $\tilde x \in X^{N_p}$
\begin{enumerate}
    \item the $m$ components of $\mathbf{c}(\tilde x)$ satisfy $c_0(\tilde x),c_1(\tilde x),\dots,c_{m-1}(\tilde x) \in U$;
    \item with $x^0=\tilde x$, we have ${x^{j+1} = f(x^j, c_j(\tilde x))}$ $\in X^{N_p}$ for all $j=0,\dots,m-1$;
    \item $[\boldu_{[0:N_p-1]},\mathbf{c}(\tilde x)]$ satisfies \adc~\eqref{eqn:dclf_orderm}, i.e. the control sequence $[\boldu_{[0:N_p-1]},\mathbf{c}(\tilde x)]$ steers $x^0$ to some $x^m$ such that
    \begin{align*}
    &\frac{1}{m}\left(\sigma_m V\left(x^{m},[\boldu_{[m:N_p-1]},\mathbf{c}(\tilde x)]\right)+ \dots +\sigma_1 V\left(x^1,[\boldu_{[1:N_p-1]},c_0(\tilde x)]\right)\right)-V(x^0, \boldu_{[0:N_p-1]}) \\
    &\hspace{2cm}\leq -\alpha(x^0, \boldu_{[0:N_p-1]}).
    \end{align*} 
\end{enumerate}
\end{assumption}
The following theorem guarantees recursive feasibility as defined in e.g.~\cite{DM:14}. 
\begin{theorem}\label{thm:rec_feas}
Let $V: \mathbb{R}^n\times \mathbb{R}^{p, N_p} \to \mathbb{R}$ be a \hodclf~of order $m$ for system~\eqref{eqn:control_system}. Suppose that Assumption~\ref{assump:loc_feedback_recfeas} is satisfied and that Problem~\ref{prob:ho-dclf} with $x=x(\initimetimev)$ is feasible. Then the MPC
scheme given in Algorithm~\ref{algo:ho-dclf} is recursively feasible.
\end{theorem}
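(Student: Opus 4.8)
The plan is to prove recursive feasibility by induction over the optimization instances---the increasing sequence of times at which step three of Algorithm~\ref{algo:ho-dclf} solves Problem~\ref{prob:ho-dclf}. The base case is exactly the standing hypothesis that the problem with $x=x(\initimetimev)$ is feasible. For the inductive step I would fix an instance at which the problem is feasible, with current state $x$, reference $\boldu^{*-}_{[0:N_p-1]}$, optimal input $\mathbf{u}^{*}=[u^{*0},\dots,u^{*(N_p+m-1)}]$ and predicted trajectory $x^{*0}=x,x^{*1},\dots,x^{*(N_p+m)}$. Once step four has selected $\ld\in\{1,\dots,m\}$ and step five has updated the data, the next instance carries state $x^{*\ld}$ and reference $\boldu^{*-}_{\mathrm{new}}:=\mathbf{u}^{*}_{[\ld:N_p+\ld-1]}$, and the goal is to exhibit a feasible point of~\eqref{eq:display} for these new data. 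I would first record that $\boldu^{*-}_{\mathrm{new}}\in\mathbf{U}_{[0:N_p-1]}(x^{*\ld})$, which follows at once from the feasibility constraints $[u^{*l},\dots,u^{*(l+N_p-1)}]\in\mathbf{U}_{[0:N_p-1]}(x^{*l})$ in~\eqref{eq:display} for $l=0,\dots,m$: these force $x^{*\ld},\dots,x^{*(N_p+\ld)}\in X$ and $u^{*\ld},\dots,u^{*(N_p+\ld-1)}\in U$.

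The candidate input I would use comes from Assumption~\ref{assump:loc_feedback_recfeas} applied with $x^{0}=x^{*\ld}$ and $\boldu_{[0:N_p-1]}=\boldu^{*-}_{\mathrm{new}}$: this yields a terminal feedback $\mathbf{c}$, and I set $\tilde{\mathbf{u}}:=[\boldu^{*-}_{\mathrm{new}},\mathbf{c}(\tilde x)]$ with $\tilde x:=x^{*(N_p+\ld)}$ the state reached from $x^{*\ld}$ under $\boldu^{*-}_{\mathrm{new}}$. The whole point of matching $\boldu_{[0:N_p-1]}$ to $\boldu^{*-}_{\mathrm{new}}$ is that the reference appearing in the \adc~of Assumption~\ref{assump:loc_feedback_recfeas} is then exactly $V(x^{*\ld},\boldu^{*-}_{\mathrm{new}})$, so that the average decrease constraint~\eqref{eqn:dclf_orderm} of~\eqref{eq:display} for the next instance is inherited verbatim by $\tilde{\mathbf{u}}$---the only genuinely Lyapunov-theoretic constraint is thereby discharged. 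The input constraints on the appended block are given by the first item of the assumption, the first $N_p$ inputs lie in $U$ by the membership $\boldu^{*-}_{\mathrm{new}}\in\mathbf{U}_{[0:N_p-1]}(x^{*\ld})$, and all predicted states stay in $X$ by that same membership together with the invariance $X^{N_p}\subseteq X$ furnished by the second item.

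The hard part will be the terminal constraint $x^{N_p}\in X^{N_p}$ for $\tilde{\mathbf{u}}$, equivalently $\tilde x=x^{*(N_p+\ld)}\in X^{N_p}$, since this is precisely the state at which $\mathbf{c}$ is appended and at which the invariance of the second item must be activated. The obstruction is genuine: the objective horizon forces only $x^{*N_p}\in X^{N_p}$, whereas $\boldu^{*-}_{\mathrm{new}}$ also drags along the trailing inputs $u^{*N_p},\dots,u^{*(N_p+\ld-1)}$, which enter~\eqref{eq:display} solely through the \adc~and never through the cost $\sum_{j=0}^{N_p-1}f_0(x^j,u^j)+\phi(x^{N_p})$. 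The structural fact I would lean on is exactly this cost-irrelevance of the trailing $m$ inputs: because Assumption~\ref{assump:loc_feedback_recfeas} produces $\mathbf{c}(x^{*N_p})$ as one admissible realization of that block at the \emph{current} instance, $\mathbf{u}^{*}$ may be taken to agree with it there, whereupon the invariance of the second item propagates $x^{*N_p}\in X^{N_p}$ to $x^{*N_p},\dots,x^{*(N_p+m)}\in X^{N_p}$ and in particular yields $x^{*(N_p+\ld)}\in X^{N_p}$. Confirming that this choice of trailing block is consistent with the \adc~the current instance already satisfies---reconciling the order-$m$ average, the weights $\sigma_i$ of~\eqref{eqn:sigma_ave}, and the invariance property---is the crux; once it is in hand, the remaining constraints of~\eqref{eq:display} for $\tilde{\mathbf{u}}$ reduce to the state memberships just established, and the induction closes.
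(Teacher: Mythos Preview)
Your inductive scaffolding and the candidate $\tilde{\mathbf{u}}=[\boldu^{*-}_{\mathrm{new}},\mathbf{c}(\tilde x)]$ coincide exactly with what the paper uses, and you correctly single out the membership $\tilde x=x^{*(N_p+\ld)}\in X^{N_p}$ as the only non-trivial issue. The gap is in how you try to close it. Your argument is that the trailing $m$ inputs of the optimizer $\mathbf{u}^{*}$ may be replaced by $\mathbf{c}(x^{*N_p})$ because they do not enter the cost; but for the modified sequence $[\boldu^{*}_{[0:N_p-1]},\mathbf{c}(x^{*N_p})]$ to remain feasible at the current instance it must still satisfy the \adc~with reference $V(x,\boldu^{*-}_{[0:N_p-1]})$. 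Assumption~\ref{assump:loc_feedback_recfeas} applied with $\boldu_{[0:N_p-1]}=\boldu^{*}_{[0:N_p-1]}$ only yields \adc~against the wrong reference $V(x,\boldu^{*}_{[0:N_p-1]})$, while applying it with $\boldu_{[0:N_p-1]}=\boldu^{*-}_{[0:N_p-1]}$ forces the first $N_p$ inputs to be $\boldu^{*-}_{[0:N_p-1]}$ rather than the optimal $\boldu^{*}_{[0:N_p-1]}$. So the sentence ``Assumption~\ref{assump:loc_feedback_recfeas} produces $\mathbf{c}(x^{*N_p})$ as one admissible realization of that block at the current instance'' is not substantiated, and the ``crux'' you defer is in fact not available from the hypotheses as stated.

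The paper avoids this difficulty altogether by a structural device you did not use: it \emph{augments} the sets $\mathbf{U}_{[0:N_p-1]}(x^{l})$, for $l=0,\dots,m$, by adding the terminal requirement $x^{N_p}\in X^{N_p}$ directly into their definition. With this redefinition, the membership constraints $[u^{l},\dots,u^{l+N_p-1}]\in\mathbf{U}_{[0:N_p-1]}(x^{l})$ already present in Problem~\ref{prob:ho-dclf} enforce $x^{*(l+N_p)}\in X^{N_p}$ for every $l\in\{0,\dots,m\}$ at the previous instance. In particular $x^{*(N_p+\ld)}\in X^{N_p}$ is then immediate, so $\boldu^{*-}_{\mathrm{new}}$ automatically lands in the augmented $\mathbf{U}_{[0:N_p-1]}(x(k))$, and a single application of Assumption~\ref{assump:loc_feedback_recfeas} (with $\boldu_{[0:N_p-1]}=\boldu^{*-}_{\mathrm{new}}$ and $\tilde x=x^{*(N_p+\ld)}$) produces the feasible candidate for the new instance---no manipulation of the optimizer's trailing block is needed. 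The invariance in item~(2) of the assumption then also certifies the augmented membership constraints for $l=0,\dots,m$ at the new instance, closing the induction.
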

\begin{proof}
To prove recursive feasibility, we make use of the terminal set $X^{N_p}$. As a first step, we add the terminal constraint of Problem~\ref{prob:ho-dclf} to the sets $\mathbf{U}_{[0:N_p-1]}(x^l)$ with $x^l \in X$ for $l = 0,1,\dots,m $
\begin{align*}
    &\mathbf{U}_{[0:N_p-1]}(x^l)= \{\boldu_{[0:N_p-1]}: \text{ with initial state } x^l, u_j \in U, x^{j+1} = f(x^j,u_j) \in X,\\
    &\hspace{4.5cm}j = 0, \dots, N_p-1, x^{N_p} \in X^{N_p}\}.    
\end{align*} 
Consider now Algorithm~\ref{algo:ho-dclf} starting at some non-negative integer $ k$, and suppose that Problem~\ref{prob:ho-dclf} was feasible in the previous iteration. 
We show that Problem~\ref{prob:ho-dclf} stays feasible after executing Algorithm~\ref{algo:ho-dclf}.
To that end, suppose we arrived at the state $x(k)\in X$ and that the previous control strategy is given by $\boldu^{*-}_{[0:N_p-1]}$. Thus, we define the initial state of Problem~\ref{prob:ho-dclf} as $x^0=x(k)$. Since $\boldu^{*-}_{[0:N_p-1]}\in \mathbf{U}_{[0:N_p-1]}(x(k))$, the states steered by $\boldu^{*-}_{[0:N_p-1]}$ satisfy 
\[
x^{1}, \dots, x^{N_p-1} \in X, \quad 
x^{N_p} \in X^{N_p}.
\]
Additionally, all components of $\boldu^{*-}_{[0:N_p-1]}$ are elements of $U$. By considering $\tilde x=x^{N_p} \in X^{N_p}$ in Assumption~\ref{assump:loc_feedback_recfeas}, the control sequence $[\boldu^{*-}_{[0:N_p-1]},\mathbf{c}(x^{N_p})]$ and the corresponding states satisfy \adc. Moreover, the controller renders $X^{N_p}$ invariant. Therefore, we have  $x^{N_p+1}, \dots, x^{N_p+m}\in X^{N_p}\subseteq X$. Hence, the choice of $[\boldu^{*-}_{[0:N_p-1]},\mathbf{c}(x^{N_p})]$ yields a feasible controller. 
\end{proof}
\begin{theorem}\label{thm:algo_convtozero}
Assume that Problem~\ref{prob:ho-dclf} is initially feasible with $x=x(k_0)$, $k_0 \in \mathbb{N}$ and Algorithm~\ref{algo:ho-dclf} generates a control sequence which satisfies the conditions in Assumption~\ref{assump:small_control_prop}. Then under Algorithm~\ref{algo:ho-dclf}, 
we have that $x(k) $ converges to zero as $ k $ goes to infinity.
\end{theorem}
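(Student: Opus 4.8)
The plan is to track the Lyapunov values only at the \emph{optimization instances} $k_0 = \pi_0 < \pi_1 < \cdots$ (the times at which Problem~\ref{prob:ho-dclf} is solved, step three of Algorithm~\ref{algo:ho-dclf}), to show that these form a strictly decreasing, convergent sequence, to deduce that the associated states and controls tend to zero, and finally to upgrade this to convergence of the full trajectory $x(k)$. First I would note that, since each implemented horizon satisfies $1 \le \ld \le m$, consecutive instances obey $1 \le \pi_{i+1} - \pi_i \le m$; hence $k \to \infty$ forces $i \to \infty$, and under the standing feasibility assumptions (so that the scheme is well defined for all $k$; cf.\ Theorem~\ref{thm:rec_feas} and the standing assumption that a bounded solution exists) the algorithm produces infinitely many instances. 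The existence of a valid index $\ld$ in step four is guaranteed by Proposition~\ref{thm:ex_subsqn_stability}.

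The core step is a telescoping estimate. Writing $W_i := V\big(x(\pi_i), \boldu^{*-}(\pi_i)\big)$, the choice of $\ld$ in step four and the reassignment $\boldu^{*-}_{[0:q-1]} := \boldu^*_{[\ld:q+\ld-1]}$ in step five give, by construction, $x(\pi_{i+1}) = x^{*\ld}$ and $\boldu^{*-}(\pi_{i+1}) = \boldu^*_{[\ld:q+\ld-1]}$, so that the defining inequality of step four reads
\[
  W_{i+1} \le W_i - \alpha\big(x(\pi_i), \boldu^{*-}(\pi_i)\big);
\]
this is exactly the chain of inequalities recorded in Remark~\ref{rkm:changeustarminus}. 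Since $V$ is positive definite (so $W_i \ge 0$) and $\alpha \ge 0$, the sequence $(W_i)$ is non-increasing and bounded below, hence convergent; summing the estimate yields $\sum_i \alpha\big(x(\pi_i), \boldu^{*-}(\pi_i)\big) \le W_0 < \infty$, so $\alpha\big(x(\pi_i), \boldu^{*-}(\pi_i)\big) \to 0$. I would then invert $\alpha$: because $\alpha$ is continuous, positive definite and radially unbounded, $\alpha(z_i) \to 0$ forces $z_i \to 0$ (radial unboundedness keeps $(z_i)$ bounded, and on any annulus $\{\eps \le \|z\| \le M\}$ the continuous, strictly positive $\alpha$ attains a positive minimum, which is incompatible with $\alpha(z_i)\to 0$ unless $z_i \to 0$). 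Applied to $z_i = \big(x(\pi_i), \boldu^{*-}(\pi_i)\big)$ this gives $x(\pi_i) \to 0$ and, by continuity of $V$ with $V(0,\mathbf 0)=0$, $W_i \to 0$.

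It remains to handle the intermediate states, and this is where I expect the main difficulty. For $\pi_i \le k \le \pi_{i+1}$ the actual state $x(k)$ equals one of the predicted states $x^{*0}, \dots, x^{*\ld}$ of instance $i$, all of which lie among $x^{*0}, \dots, x^{*m}$. The average decrease constraint~\eqref{eqn:dclf_orderm} bounds their weighted $V$-values, $\tfrac{1}{m}\sum_{j=1}^{m} \sigma_j V\big(x^{*j}, \boldnu_{[j:q+j-1]}\big) \le W_i$, so for every index with $\sigma_j > 0$ one obtains $V\big(x^{*j}, \boldnu_{[j:q+j-1]}\big) \le \tfrac{m}{\sigma_j} W_i \to 0$; combined with $V \ge \alpha$ (condition~\eqref{eqn:cond_V_alpha}) and the same compactness inversion applied to $\alpha$, the corresponding states tend to zero. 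Since $i \to \infty$ as $k \to \infty$, this yields $x(k) \to 0$.

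The delicate point is precisely that between instances $V$ is allowed to \emph{increase}, so the intermediate states are not controlled by the monotone anchor sequence alone; the weighted bound above closes this gap when the weights are positive, and in the general case one instead bounds $x(k) = x^{*j}$ by propagating $x(\pi_i) \to 0$ through at most $m-1$ applications of the continuous map $f$ with the (bounded) optimal inputs, using $f(0,0)=0$. Making this last reduction \emph{uniform} in $i$, rather than merely pointwise, is the step I would expect to require the most care, and the place where the positivity of the weights $\sigma_i$ (as in Theorem~\ref{thm:ho-dclf_lyapstab}) makes the argument cleanest.
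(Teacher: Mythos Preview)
Your approach differs from the paper's. The paper argues by contradiction, mirroring the proof of Theorem~\ref{thm:ho-dclf-stab}: assuming $x(k)\not\to 0$, it constructs the same anchor sequence $\boldsymbol{\omega}^{k_n}=(x(k_n),\boldu^{k_n}_{[0:q-1]})$ at optimization instances, observes that $V(\boldsymbol{\omega}^{k_n})$ is strictly decreasing and hence convergent to some $\delta$, argues $\delta>0$, traps the $\boldsymbol{\omega}^{k_n}$ in the compact shell $C=\{\delta\le V\le V(\boldsymbol{\omega}^{k_0})\}$, and uses $\min_C\alpha=S>0$ together with the telescoping identity to force $V(\boldsymbol{\omega}^{k_{\bar N}})<0$ for large $\bar N$. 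You instead run the telescoping \emph{directly}: summability of $\alpha$ along the anchor sequence gives $\alpha\to 0$, and you invert $\alpha$ via positive definiteness plus radial unboundedness to obtain $(x(\pi_i),\boldu^{*-}(\pi_i))\to 0$. Both routes yield convergence of the anchor subsequence; yours is a bit more elementary (no compactness on a shell), while the paper's packages the argument as a single contradiction.

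Where you go beyond the paper is in explicitly treating the \emph{intermediate} states $x(k)$ for $\pi_i<k<\pi_{i+1}$; the paper's proof does not separate this out and instead relies on the contradiction structure. Your weighted-bound argument, $V(x^{*j},\boldnu_{[j:q+j-1]})\le \tfrac{m}{\sigma_j}W_i\to 0$, is clean and correct whenever $\sigma_j>0$, and you are right that this is exactly the regime of Theorem~\ref{thm:ho-dclf_lyapstab}. Your fallback for general $\sigma_j\ge 0$, however, is not complete as stated: propagating $x(\pi_i)\to 0$ through $j\le m$ applications of the continuous map $f$ requires not just boundedness of the implemented inputs $u^{*0},\dots,u^{*\ld-1}$ but that they tend to zero, and nothing you have established forces this (you only know $\boldu^{*-}(\pi_{i+1})=\boldu^{*}_{[\ld:q+\ld-1]}\to 0$, which controls inputs \emph{after} index $\ld$). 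So the ``uniform in $i$'' concern you flag is a genuine gap in the general case, not merely a matter of care.
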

\begin{theorem}\label{thm:algo_conv}
Assume that Problem~\ref{prob:ho-dclf} is initially feasible with $x=x(k_0)$, $k_0 \in \mathbb{N}$ and Algorithm~\ref{algo:ho-dclf} generates a control sequence which satisfies the conditions in Assumption~\ref{assump:small_control_prop}. Then under Algorithm~\ref{algo:ho-dclf}, 
we have that $x(k) $ converges to zero as $ k $ goes to infinity. Moreover, if all the weights $\sigma_i$ in \adc~are positive, then the origin is rendered asymptotically stable by Algorithm~\ref{algo:ho-dclf}.
\end{theorem}

The proofs of Theorem~\ref{thm:algo_convtozero} and~\ref{thm:algo_conv} are a consequence of Theorem~\ref{thm:ho-dclf-stab} and~\ref{thm:ho-dclf_lyapstab} and are presented in Appendix~\ref{sec:appendix} for the sake of completeness. Now that we have established the properties of the flexible-step MPC scheme, some remarks are in order.

\begin{remark}\label{rmk:recov_stand_MPC}
{\em 
To recover the standard MPC scheme from our framework, we consider the case of $q=N_p, m=1$ and $\sigma_1=1$. An appropriate terminal cost $\phi(x^{N_p})$, i.e. an (ordinary) control Lyapunov function on $X^{N_p}$, ultimately makes the objective function a suitable choice for the \hodclf~$V$. Since $m=1$, \adc~reads
\begin{align}\label{eqn:adc_stdMPC}
\begin{aligned}
    &V(x^1,[u^1, \dots, u^{N_p}]) - V(x^0,\boldu^{*-}_{[0:N_p-1]})\leq -\alpha(x^0,\boldu^{*-}_{[0:N_p-1]}),
\end{aligned}
\end{align}
where $\boldu^{*-}_{[0:N_p-1]}$ is the previous control strategy. Theorem~\ref{thm:rec_feas} guarantees that there exists a feasible $u^{N_p}$, such that the resulting controller $[\boldu^{*-}_{[0:N_p-1]},u^{N_p}]$ will satisfy \adc. This implies that with the standard MPC assumption, which was generalized by Assumption~\ref{assump:loc_feedback_recfeas}, constraint~\eqref{eqn:adc_stdMPC} is always satisfied, meaning that it does not further restrict the feasible controls. Here, the constraint \adc~demands a strict descent of the objective function $V$ after \emph{one} time step; this corresponds exactly to the standard MPC setting.  
\oprocend
}
\end{remark}

\begin{remark}
{\em
In the standard MPC setting, as discussed in Remark~\ref{rmk:recov_stand_MPC}, the Lyapunov function is chosen to be the objective function. In Problem~\ref{prob:ho-dclf} the Lyapunov function can generally be a different function. Ultimately, both, the optimization and the stabilization, seek zero state and control, nevertheless, focussing solely on one may hurt the overall objective. It has already been shown in the past~\cite{JP-VN-JD:98,1AB:98,MS-MD:90,PM-NE-PC:05} that it can be beneficial to choose a different Lyapunov function than the objective function to guarantee stability. However, the consideration of \hodclf s is novel, as they do not only depend on the state but also on the control input. Moreover, note that if $m=N_p$ and $q$ is greater than zero, the last function value that is considered in \adc~in Problem~\ref{prob:ho-dclf} is $V(x^{N_p}, [u^{N_p}, \dots,u^{N_p+q-1}])$. In other words, we will consider control inputs which go beyond the prediction horizon and help us decrease $V$ on average. This occurs in general if $m + q>N_p$.\oprocend}
\end{remark}

\begin{remark}
\em 
As mentioned in the previous remark, the proposed flexible-step MPC does not rely on the objective function as the Lyapunov function. Instead, the proofs of Theorem~\ref{thm:algo_convtozero} and \ref{thm:algo_conv} utilize the properties of a \hodclf~and not the positive definiteness of the cost function $f_0$. Assuming positive definiteness allows us, however, to recover standard MPC from flexible-step MPC (Remark~\ref{rmk:recov_stand_MPC}). \oprocend
\end{remark}

\section{Stability proofs}\label{sec:stability_proofs}
In this section, we provide the proofs of the stability results of Section~\ref{sec:notions_of_stab}. To obtain a better intuition about Algorithm~\ref{algo:ho-dclf}, the reader can skip this section in the first read and instead visit Section~\ref{sec:application_nonholo_sys} for case studies. 

To prove the first result, we need the following proposition.
\begin{proposition}\label{thm:ex_subsqn_stability}
If there exists a \hodclf~of order $m$ for the control system~\eqref{eqn:control_system}, then for any $(x^0,\boldu_{[0:q-1]}) \in$  ${\mathbb{R}^n\times \mathbf{U}_{[0:q-1]}(x^0})$ there exists $\boldsymbol{\nu}_{[0:q+m-1]}$ such that 
\begin{align*}
    V(x^l,\boldsymbol{\nu}_{[l:q+l-1]}) - V(x^0,\boldu_{[0:q-1]}) \leq -
    \alpha(x^0,\boldu_{[0:q-1]})
\end{align*}
holds for some $l \in \{1,\dots, m\}$ with $\boldnu_{[l:q+l-1]}\in \mathbf{U}_{[0:q-1]}(x^l)$.
\end{proposition}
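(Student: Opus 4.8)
The plan is to argue by contradiction, leveraging the two defining conditions of a \hodclf. First I would invoke condition ii$'$) of Definition~\ref{def:ho-dclf} to produce, for the given $(x^0,\boldu_{[0:q-1]})$, a sequence $\boldsymbol{\nu}_{[0:q+m-1]}$ with $\boldsymbol{\nu}_{[l:q+l-1]} \in \mathbf{U}_{[0:q-1]}(x^l)$ for each $l \in \{0,\dots,m\}$ that satisfies the average decrease constraint~\eqref{eqn:dclf_orderm}. This is exactly the $\boldsymbol{\nu}$ whose existence the proposition asserts, together with the required feasibility; so it only remains to show that at least one of the \emph{individual} values, rather than merely their weighted average, achieves the claimed descent.

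To lighten notation, write $V_l := V(x^l,\boldsymbol{\nu}_{[l:q+l-1]})$ for $l=1,\dots,m$ and set $V_0 := V(x^0,\boldu_{[0:q-1]})$ and $A := \alpha(x^0,\boldu_{[0:q-1]})$; in these terms~\eqref{eqn:dclf_orderm} reads $\frac{1}{m}\sum_{l=1}^m \sigma_l V_l \leq V_0 - A$. Suppose, for contradiction, that $V_l > V_0 - A$ holds for every $l \in \{1,\dots,m\}$. I would then multiply each such inequality by the corresponding weight $\sigma_l \geq 0$ and sum over $l$; since the normalization~\eqref{eqn:sigma_ave} forces $\sum_{l=1}^m \sigma_l \geq m > 0$, at least one weight is strictly positive, so the summed inequality is strict, giving $\sum_{l=1}^m \sigma_l V_l > (V_0 - A)\sum_{l=1}^m \sigma_l$.

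The crucial step, and the one I expect to be the main obstacle, is to pass from $(V_0 - A)\sum_l \sigma_l$ down to $(V_0 - A)\,m$: this requires $V_0 - A \geq 0$, so that replacing the factor $\sum_l \sigma_l \geq m$ by $m$ does not reverse the inequality. Precisely this nonnegativity is supplied by condition i$'$)~\eqref{eqn:cond_V_alpha}, which is the reason the weight overshoot $\sum_l \sigma_l \geq m$ works in the favorable direction rather than against us. Combining the two bounds yields $\frac{1}{m}\sum_{l=1}^m \sigma_l V_l > V_0 - A$, contradicting~\eqref{eqn:dclf_orderm}. Hence some index $l \in \{1,\dots,m\}$ satisfies $V_l \leq V_0 - A$, that is $V(x^l,\boldsymbol{\nu}_{[l:q+l-1]}) - V(x^0,\boldu_{[0:q-1]}) \leq -\alpha(x^0,\boldu_{[0:q-1]})$, with the feasibility $\boldsymbol{\nu}_{[l:q+l-1]} \in \mathbf{U}_{[0:q-1]}(x^l)$ already guaranteed by condition ii$'$). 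For the case $q=0$ the identical argument applies, using conditions i), ii) and~\eqref{eqn:adc_q0} in place of their primed counterparts.
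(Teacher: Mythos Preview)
Your proposal is correct and follows essentially the same contradiction argument as the paper's proof: assume every $V_l$ exceeds $V_0 - \alpha$, form the weighted sum, and use~\eqref{eqn:cond_V_alpha} together with~\eqref{eqn:sigma_ave} to contradict~\eqref{eqn:dclf_orderm}. The paper presents the case $m=2$ for notational simplicity, whereas you write out general $m$ directly, but the logical structure and the use of the key ingredients are identical.
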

\begin{proof}
To ease notation, we present this proof for the case $m=2$; the general case follows similarly. Let $x^0 \in \mathbb{R}^n$ be some arbitrary state and let
$\boldu_{[0:q-1]}$ be an arbitrary $q$ long control sequence belonging
to the set $\mathbf{U}_{[0:q-1]}(x^0)$.We claim that condition~\eqref{eqn:dclf_orderm} and \eqref{eqn:sigma_ave} imply that there exists a control sequence $\boldsymbol{\nu}_{[0:q+1]}$, which steers $x^0$ to some $x^2$, such that either
\begin{align*}
    V(x^1,\boldsymbol{\nu}_{[1:q]}) - V(x^0,\boldu_{[0:q-1]}) \leq - \alpha(x^0,\boldu_{[0:q-1]}) 
\end{align*}
or 
\begin{align*}
 V(x^2,\boldsymbol{\nu}_{[2:q+1]})- V(x^0,\boldu_{[0:q-1]}) \leq - \alpha(x^0,\boldu_{[0:q-1]})    
\end{align*}
holds, meaning that either $V(x^1,\boldsymbol{\nu}_{[1:q]})$ or $V(x^2,\boldsymbol{\nu}_{[2:q+1]})$ is strictly less than $V(x^0,\boldu_{[0:q-1]})$, as $\alpha$ is positive definite. Suppose otherwise. Then
\begin{align*}
    &\frac{\sigma_2V(x^2,\boldsymbol{\nu}_{[2:q+1]}) + \sigma_1V(x^1,\boldsymbol{\nu}_{[1:q]})}{2} - V(x^0,\boldu_{[0:q-1]}) +\alpha(x^0,\boldu_{[0:q-1]})\\[5pt]
    & \hspace{10pt} >\frac{\sigma_2(V(x^0,\boldu_{[0:q-1]})- \alpha(x^0,\boldu_{[0:q-1]}))}{2} + \frac{\sigma_1(V(x^0,\boldu_{[0:q-1]})- \alpha(x^0,\boldu_{[0:q-1]}))}{2} \\
    & \hspace{21pt} - V(x^0,\boldu_{[0:q-1]})     + \alpha(x^0,\boldu_{[0:q-1]})\\[5pt]
    & \hspace{10pt}= \left(\frac{\sigma_2 + \sigma_1}{2}-1\right)(V(x^0,\boldu_{[0:q-1]})-\alpha(x^0,\boldu_{[0:q-1]}))\\
    & \hspace{10pt}\geq\hspace{0.1cm}0
\end{align*}
for all control sequences $\boldnu_{[0:q+1]}$, where the last inequality follows by the virtue of~\eqref{eqn:cond_V_alpha} and \eqref{eqn:sigma_ave}. The derived inequality contradicts \eqref{eqn:dclf_orderm}, finishing the proof. 
\end{proof}
Proposition~\ref{thm:ex_subsqn_stability} is important in what follows as it proves the  existence of an index $l \in \{1, \dots, m\}$ where we have a decay of the generalized discrete-time control Lyapunov function. Using Proposition \ref{thm:ex_subsqn_stability}, we now prove Theorem~\ref{thm:ho-dclf-stab}; the proof is a modification of a standard Lyapunov argument, see for instance \cite{HK:02}.
\begin{proof}[Proof of Theorem~\ref{thm:ho-dclf-stab}]
Fix $k_0 \in \mathbb{N}$ and $\inistatetimev \in X$. If the state $\inistatetimev$ is zero, there is nothing to show. So we may assume $x^{k_0}\neq 0$. Suppose, by contradiction, that there does not exist a feasible control sequence which steers $x^k$ to zero as $k$ goes to infinity. Note that this implies that $x^k$ can neither be equal to nor get arbitrarily close to zero for some $k \geq k_0$ as we elaborate at the end of this proof. 
Our goal is to define a subsequence 
\begin{align}\label{eqn:conv_subsqn}
    \{(x^{k_n}, \boldu^{k_n}_{[0:q-1]})\}_{n=0}^{\infty}    
\end{align}
with the help of Proposition~\ref{thm:ex_subsqn_stability}, so that the Lyapunov function values evaluated at these states and controls strictly decrease. To this end, let $\mathbf{u}_{[0:q-1]} \in \mathbf{U}_{[0:q-1]}(x^{k_0})$ be a fixed, but arbitrary, control sequence. By Proposition~\ref{thm:ex_subsqn_stability}, there exists $\boldsymbol{\nu}^{k_0}_{[0:q+m-1]}$, which steers $x^{k_0}$ to $x^{k_0+m}$ along~\eqref{eqn:control_system}, such that for some $l \in \{1,\dots, m\}$
\begin{align}\label{eqn:stab_proof_ldescent}
    V(x^{k_0+l},\boldsymbol{\nu}^{k_0}_{[l:q+l-1]}) < V(x^{k_0},\boldu_{[0:q-1]}),    
\end{align}
where $\boldnu^{k_0}_{[l:q+l-1]} \in \mathbf{U}_{[0:q-1]}(x^{k_0+l})$.
As a result, we can define the first two members of our subsequence:
\begin{align*}
    \boldsymbol{\omega}^{k_0}&:=(x^{k_0},\boldu_{[0:q-1]})\\
    \boldsymbol{\omega}^{k_1}&:= (x^{k_0+l^{k_0}},\boldsymbol{\nu}^{k_0}_{[l^{k_0}:q+l^{k_0}-1]}),
    \intertext{where}
    l^{k_0} &= \argmin_{l=
1,\dots,m}V(x^{k_0+l},\boldsymbol{\nu}^{k_0}_{[l:q+l-1]}) - V(\boldsymbol{\omega}^{k_0}).
\end{align*}
We define $\boldsymbol{\omega}^{k_1}$ in this fashion because there might be multiple pairs of states and controls that achieve a strict descent; in this proof we choose the pair with the greatest descent as $\boldsymbol{\omega}^{k_1}$. Other choices are possible, as long as equation~\eqref{eqn:stab_proof_ldescent} is satisfied. To define the rest of the subsequence, we proceed as follows. Suppose $\boldsymbol{\omega}^{k_n}$ has been defined. By Proposition~\ref{thm:ex_subsqn_stability}, there exists again $\boldsymbol{\nu}^{k_n}_{[0:q+m-1]}$ such that for some $l\in \{1,\dots,m\}$ the Lyapunov function value is strictly below $V(\boldsymbol{\omega}^{k_n})$ and $\boldnu^{k_n}_{[l:q+l-1]} \in \mathbf{U}_{[0:q-1]}(x^{k_n+l})$. We can define $\boldsymbol{\omega}^{k_{n+1}}$ accordingly
\begin{align*}
    \boldsymbol{\omega}^{k_{n+1}} &:= (x^{k_n+l^{k_n}},\boldnu^{k_n}_{[l^{k_n}:q+l^{k_n}-1]}),
    \intertext{where }
    l^{k_n} &= \argmin_{l= 1,\dots,m}V(x^{k_n+l},\boldnu^{k_n}_{[l:q+l-1]}) - V(\boldsymbol{\omega}^{k_n}).
\end{align*}
Along the subsequence $\{\boldsymbol{\omega}^{k_n}\}_{n=0}^{\infty}$, we obtain the following inequalities
\begin{align}\label{eqn:Vstrictineqs}
    V(\boldsymbol{\omega}^{k_0}) > V(\boldsymbol{\omega}^{k_1})&> \dots > V(\boldsymbol{\omega}^{k_n}) > \dots \geq 0.
\end{align}
The last inequality follows from the non-negativity of the \hodclf~$V$. Hence, the sequence $\{V(\boldsymbol{\omega}^{k_n})\}_{n=0}^\infty$ converges to some $ \delta \in \mathbb{R}$ as $n \to \infty$. Since $x^{k_n}\neq 0$ for all $n \in \mathbb{N}$ and $x^{k_n}$ also cannot get arbitrarily close to zero for some $n \in \mathbb{N}$, it follows that $\delta >0$ and that 
\begin{align}\label{eqn:xkn_in_C}
    \delta \leq V(\boldsymbol{\omega}^{k_n}) \leq V(\boldsymbol{\omega}^{k_0}) \hspace{0.3cm} \forall n \in \mathbb{N}.
\end{align} 
Next, we define 
\begin{align*}
    &C:=\{(z^0,\boldsymbol{\zeta}) \in \mathbb{R}^n\times \mathbf{U}_{[0:q-1]}(z^0): \delta \leq V(z^0,\boldsymbol{\zeta}) \leq V(\boldsymbol{\omega}^{k_0})\}.
\end{align*}
Since $V$ is continuous and by~\eqref{eqn:cond_V_alpha} radially unbounded, $C$ is closed and bounded, and hence a compact subset of $\mathbb{R}^n\times \mathbb{R}^{p,q}$. Since there exist states and controls such that  \eqref{eqn:xkn_in_C} follows, it holds $\boldsymbol{\omega}^{k_n} \in C$ for all $n \in \mathbb{N}$. For $(z^0,\boldsymbol{\zeta}_{[0:q-1]}) \in C$, Proposition~\ref{thm:ex_subsqn_stability} lets us conclude that there exists a control sequence $\boldsymbol{\xi}_{[0:q+m-1]}$ and some $l \in \{1,\dots, m\}$ for which $V(z^l,\boldsymbol{\xi}_{[l:q+l-1]})-V(z^0,\boldsymbol{\zeta}_{[0:q-1]}) \leq -\alpha(z^0,\boldsymbol{\zeta}_{[0:q-1]})$ and $\boldsymbol{\xi}_{[l:q+l-1]} \in \mathbf{U}_{[0:q-1]}(z^l)$.
Therefore,
\begin{align*}
    &\min_{l=1,\dots,m} \hspace{-0.1cm}V(z^l,\boldsymbol{\xi}_{[l:q+l-1]}) - V(z^0,\boldsymbol{\zeta}_{[0:q-1]}) \\
    & \hspace{40pt} \leq -\alpha(z^0,\boldsymbol{\zeta}_{[0:q-1]}), \hspace{0.3cm} \hspace{0.5cm}\forall (z^0,\boldsymbol{\zeta}_{[0:q-1]}) \in C\\
    \intertext{and consequently,}
    &\sup_{(z^0,\boldsymbol{\zeta}) \in C} \left[\min_{l=1,\dots,m}\hspace{-0.1cm} V(z^l,\boldsymbol{\xi}_{[l:q+l-1]}) - V(z^0,\boldsymbol{\zeta}_{[0:q-1]})\right] \\
    & \hspace{40pt} \leq \sup_{(z^0,\boldsymbol{\zeta}) \in C}-\alpha(z^0,\boldsymbol{\zeta}_{[0:q-1]}) \\
    & \hspace{40pt}=\max_{(z^0,\boldsymbol{\zeta}) \in C}-\alpha(z^0,\boldsymbol{\zeta}_{[0:q-1]})=: -S<0.
\end{align*}
The above supremum is attained for some $(z^0,\boldsymbol{\zeta}_{[0:q-1]}) \in C$ because $C$ is compact and $\alpha$ is continuous. For any $(z^0,\boldsymbol{\zeta}_{[0:q-1]}) \in C$ we have $V(z^0,\boldsymbol{\zeta}_{[0:q-1]})\geq \delta >0$. The positive definiteness of $V$, following from condition~\eqref{eqn:cond_V_alpha} in Definition~\ref{def:ho-dclf}, implies that $(z^0,\boldsymbol{\zeta}_{[0:q-1]})$ is non-zero and thus the positive definiteness of $\alpha$ implies that the above maximum is strictly less than zero. Since $\boldsymbol{\omega}^{k_n} \in C$ for all $n \in \mathbb{N}$, we have that
\begin{align}\label{eqn:leq-S}
    V(\boldsymbol{\omega}^{k_{n+1}}) - V(\boldsymbol{\omega}^{k_n}) 
    &\leq \sup_{(z^0,\boldsymbol{\zeta}) \in C} \left[\min_{l=1,\dots,m}\hspace{-0.3cm} V(z^l,\boldsymbol{\xi}_{[l:q+l-1]}) - V(z^0,\boldsymbol{\zeta}_{[0:q-1]})\right] \nonumber\\
    &\leq -S \hspace{0.3cm} \forall n \in \mathbb{N}.
\end{align}
To come to a contradiction, we now observe that
\begin{align}\label{eqn:contr_conv_zero}
    V(\boldsymbol{\omega}^{k_N}) = &V(\boldsymbol{\omega}^{k_0}) + \sum_{n=0}^{N-1}V(\boldsymbol{\omega}^{k_{n+1}})-V(\boldsymbol{\omega}^{k_{n}})\nonumber\\
    \leq &V(\boldsymbol{\omega}^{k_0}) - N\cdot S,
\end{align}
where the last inequality follows from equation \eqref{eqn:leq-S}. If $N>V(\boldsymbol{\omega}^{k_0})/S$, then the inequality~\eqref{eqn:contr_conv_zero} implies $V(\boldsymbol{\omega}^{k_N})<0$. This contradicts the positive definiteness of $V$. Hence, our assumption must have been incorrect and there exists a feasible control sequence, which achieves $\lim_{k \to \infty}x^k=0$. 

We now elaborate why under the hypothesis that there does not exist a feasible control sequence which steers $x^k$ to zero as $k$ goes to infinity, $x^k$ can neither be equal to nor get arbitrarily close to zero for some $k \geq k_0$, i.e. it cannot occur that for any $\varepsilon>0$, there exists $k \geq k_0$ with $\|x^{k}\|<\varepsilon$.
If $x^k$ were zero for some $k\geq k_0$, then the feasible zero control would keep the state at zero forever, which would not be in line with the hypothesis. If $x^k$ would get arbitrarily close to zero for some $k\geq k_0$, then there also exists a sequence of feasible controls $\{\boldu^k_{[0:q-1]}\}$ steering $x^k$ to zero thanks to Assumption~\ref{assump:small_control_prop} (Small Control Property): Let $\varepsilon >0$, we show that there exists a feasible control strategy and $K\geq k_0$ such that $\|V(x^k,\boldu^k_{[0:q-1]})\|<\varepsilon$ for all $k \geq K$. This would imply that $\lim_{k \to \infty}V(x^k, \boldu^k_{[0:q-1]}) = 0$. Hence, with the continuity and the positive definiteness of $V$, we would have $\lim_{k \to \infty}x^k=0$, which again would not be in line with the hypothesis. For ease of notation, we make this argument for $m=2$ and consider the interesting case, where some Lyapunov function values do not contribute to the average descent because their weights are zero, e.g. $\sigma_1 = 0$ and $\sigma_2 \geq 2$. Without loss of generality, we may assume that the descent of the Lyapunov function in \adc~always occurs after two time steps. 
By the continuity of $V$ at the origin, we know that for $\varepsilon_0 <\varepsilon$, there exists $\delta_0>0$ such that
\begin{equation*}
    \|x^k,\boldu^k_{[0:q-1]}\| < \delta_0 \Rightarrow \|V(x^k,\boldu^k_{[0:q-1]})\| < \varepsilon_0.
\end{equation*}
More general, we know that due to the continuity of $V$ for $\varepsilon - \varepsilon_0>0$, there exists $\delta>0$ such that
\begin{align*}
    \|(x^k,\boldu^k_{[0:q-1]}) - (x^{k+1},\boldnu^k_{[1:q]})\| < \delta  
    \Rightarrow 
    \|V(x^k,\boldu^k_{[0:q-1]}) - V(x^{k+1},\boldnu^k_{[1:q]})\| < \varepsilon - \varepsilon_0.
\end{align*}
In other words, if 
\begin{align}
    \|x^k, \boldu^k_{[0:q-1]}\| &< \delta_0 \label{eqn:xkukd0}
\intertext{and}
 \|(x^k, \boldu^k_{[0:q-1]}) - (x^{k+1}, \boldnu^{k}_{[1:q]})\| &< \delta, \label{eqn:xkukvkd}
%\intertext{ and }
%\|(x^k, \boldu^k_{[0:q-1]}) - (x^{k+1}, \boldnu^{k}_{[1:q]})\| &< \delta, \label{eqn:xkukvkd}
\end{align}
we can conclude that 
\begin{align*}
    \|V(x^{k+1}, \boldnu^{k}_{[1:q]})\| 
    &\leq \|V(x^k, \boldu^k_{[0:q-1]})\| + \|V(x^{k+1}, \boldnu^k_{[1:q]}) - V(x^k, \boldu^k_{[0:q-1]})\|\\
    &< \varepsilon_0 + \varepsilon - \varepsilon_0 = \varepsilon.
\end{align*}
To guarantee that we can find $x^k, \boldu^k_{[0:q-1]}, \boldnu^k_{[0:q+1]}$ that satisfy~\eqref{eqn:xkukd0} and~\eqref{eqn:xkukvkd}, we will use Assumption~\ref{assump:small_control_prop} and the continuity of $f$. First, choose some $0<\varepsilon_f<\delta$. Then by the continuity of $f$, there exists $\delta_f>0$ such that $\|x^k,\nu_0\|<\delta_f$ implies $\|f(x^k,\nu_0)\|<\varepsilon_f$. As the name of  Assumption~\ref{assump:small_control_prop} suggests, we know that for $\varepsilon_c>0$, there exists $\delta_c>0$ such that for all $\|x^k\|<\delta_c$ there exist feasible controls $\boldu^k_{[0:q-1]}$ and $\boldnu^k_{[0:q+1]}$ satisfying \adc~and $\max\{\|\boldu^k_{[0:q-1]}\|,\|\boldnu^k_{[0:q+1]}\|\}< \varepsilon_c$. We can now choose $\varepsilon_c$ small enough, so that $\delta_c + \varepsilon_c + \varepsilon_f + \varepsilon_c < \delta$ and $\delta_c + \varepsilon_c < \min\{\delta_0, \delta_f\}$. Since we can steer the state until $\|x^k\|< \delta_c$ for some $k \geq k_0$, it follows
\begin{align*}
    \|x^k, \boldu^k_{[0:q-1]}\| &\leq \|x^k\| + \|\boldu^k_{[0:q-1]}\| \\
    &< \delta_c +\varepsilon_c < \min\{\delta_0, \delta_f\}
\end{align*}
and
\begin{align*}
    \|(x^k, \boldu^k_{[0:q-1]}) - (x^{k+1}, \boldnu^k_{[1:q]})\| 
    &\leq \|x^k\| + \|\boldu^k_{[0:q-1]}\| + \|x^{k+1}\| +  \|\boldnu^k_{[0:q+1]}\|\\
    &= \|x^k\| + \|\boldu^k_{[0:q-1]}\| + \|f(x^{k}, \nu_0)\| +  \|\boldnu^k_{[0:q+1]}\|\\
    &< \delta_c + \varepsilon_c + \varepsilon_f + \varepsilon_c <\delta.
\end{align*}
In other words, we have found $x^k, \boldu^k_{[0:q-1]}$ and $\boldnu^k_{[0:q+1]}$ satisfying~\eqref{eqn:xkukd0} and~\eqref{eqn:xkukvkd}. This implies that the Lyapunov function value of $(x^{k+1}, \boldnu^k_{[1:q]})$ is less than $\varepsilon$. The Lyapunov function value of $(x^{k+2}, \boldnu^k_{[2:q+1]})$ is naturally less than the Lyapunov function value of $(x^{k}, \boldu^k_{[0:q-1]})$, because this is where the descent occurs. By repeating this argument and using the continuity of $V$ and $f$, as well as Assumption~\ref{assump:small_control_prop} over and over again, we can ultimately show that $V(x^k, \boldu^k_{[0:q-1]})< \varepsilon$ for all $k\geq K$, which finishes the proof.
\end{proof}
The above proof is constructive; it provides us with a control sequence which steers the state to zero. This is summarized in the following proposition.
\begin{proposition}\label{prop:construc_control}
For any $n \in \mathbb{N}$ consider $V(x^{k_{n}}, \mathbf{u}^{k_{n}}_{[0:q-1]})$, where $k_n$ is the index of the subsequence~\eqref{eqn:conv_subsqn}, and let $\boldsymbol{\nu}^{k_n}_{[0:q+m-1]}$ be a control sequence that can be found for $(x^{k_n},\mathbf{u}^{k_n}_{[0:q-1]})$ to satisfy equation~\eqref{eqn:dclf_orderm}. Furthermore, let $l^{k_{n}} \in \{1, \dots,m\}$ be the index of the state and control pair which achieves the descent of the \hodclf~$V$ and thus defines $(x^{k_{n+1}}, \mathbf{u}^{k_{n+1}}_{[0:q-1]})$. Then the control sequence 
\begin{align}\label{eqn:basecontrol}
    [\boldsymbol{\nu}^{k_0}_{[0:l^{k_0}-1]}, \boldsymbol{\nu}^{k_1}_{[0:l^{k_1}-1]}, \dots ,\boldsymbol{\nu}^{k_n}_{[0:l^{k_n}-1]}, \dots]
\end{align}
is feasible and steers the state to zero. Each $\boldsymbol{\nu}^{k_n}_{[0:l^{k_n}-1]}$ has $l^{k_n}$ elements and steers the system to the next state $x^{k_{n+1}}$. 
\end{proposition}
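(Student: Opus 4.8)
The plan is to read the statement directly off the construction performed in the proof of Theorem~\ref{thm:ho-dclf-stab}, verifying its three separate assertions in turn: that each block $\boldsymbol{\nu}^{k_n}_{[0:l^{k_n}-1]}$ consists of exactly $l^{k_n}$ inputs and moves $x^{k_n}$ to $x^{k_{n+1}}$, that the concatenation~\eqref{eqn:basecontrol} is feasible, and that the induced trajectory converges to the origin. The length and endpoint claim is immediate from the bookkeeping in the theorem: by definition $k_{n+1}=k_n+l^{k_n}$ and $\boldsymbol{\omega}^{k_{n+1}}=(x^{k_n+l^{k_n}},\boldsymbol{\nu}^{k_n}_{[l^{k_n}:q+l^{k_n}-1]})$, so the state reached by the first $l^{k_n}$ components of $\boldsymbol{\nu}^{k_n}$ is precisely $x^{k_{n+1}}$.

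First I would settle feasibility. Each $\boldsymbol{\nu}^{k_n}$ is furnished by Proposition~\ref{thm:ex_subsqn_stability} applied to the pair $(x^{k_n},\mathbf{u}^{k_n}_{[0:q-1]})$, and Definition~\ref{def:ho-dclf}~ii$'$) guarantees $\boldsymbol{\nu}^{k_n}_{[l:q+l-1]}\in\mathbf{U}_{[0:q-1]}(x^{k_n+l})$ for every $l\in\{0,\dots,m\}$. Taking the union of these membership conditions over $l=0,\dots,m$ shows that the first $m$ inputs $\boldsymbol{\nu}^{k_n}_0,\dots,\boldsymbol{\nu}^{k_n}_{m-1}$ lie in $U$ and generate states $x^{k_n+1},\dots,x^{k_n+m}\in X$; in particular the length-$l^{k_n}$ prefix is admissible. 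Since the terminal state of the $n$-th block equals the initial state $x^{k_{n+1}}$ of the $(n+1)$-th block, the blocks chain into a single admissible trajectory, so~\eqref{eqn:basecontrol} is feasible. The only point requiring a comment is that the label control $\mathbf{u}^{k_{n+1}}_{[0:q-1]}:=\boldsymbol{\nu}^{k_n}_{[l^{k_n}:q+l^{k_n}-1]}$ used to re-enter Proposition~\ref{thm:ex_subsqn_stability} is a legitimate element of $\mathbf{U}_{[0:q-1]}(x^{k_{n+1}})$ and need not coincide with the inputs actually applied after time $k_{n+1}$.

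Next I would transport the convergence argument of the theorem, this time without its contradiction hypothesis. If the trajectory reaches $\boldsymbol{\omega}^{k_{n_0}}=0$ after finitely many blocks, then $x^{k_{n_0}}=0$ and the zero input keeps the state at the origin, yielding convergence. Otherwise $\boldsymbol{\omega}^{k_n}\neq 0$ for all $n$, so the values $V(\boldsymbol{\omega}^{k_n})$ are strictly decreasing and bounded below by $0$, hence converge to some $\delta\geq 0$; the compactness and uniform-descent estimate~\eqref{eqn:leq-S}--\eqref{eqn:contr_conv_zero} rules out $\delta>0$, forcing $V(\boldsymbol{\omega}^{k_n})\to 0$. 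Combining $0\leq\alpha\leq V$ from~\eqref{eqn:cond_V_alpha} with the positive definiteness and radial unboundedness of $\alpha$ then gives $\boldsymbol{\omega}^{k_n}\to 0$, and in particular $x^{k_n}\to 0$ along the strictly increasing, hence diverging, time indices $k_n$.

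The main obstacle is upgrading this subsequence statement to $\lim_{k\to\infty}x^k=0$ along the \emph{full} trajectory: within a block of length $l^{k_n}\leq m$ the value of $V$ may temporarily increase, so the intermediate states $x^{k_n+1},\dots,x^{k_{n+1}-1}$ are not directly dominated by $V(\boldsymbol{\omega}^{k_n})$. I expect to close this gap using that the block length is uniformly bounded by $m$, the continuity of $f$ with $f(0,0)=0$, and local boundedness of the finitely many applied inputs near the origin, so that the intermediate states are squeezed to zero together with $\{x^{k_n}\}$. This is exactly the place where the chosen controls must be handled with some care, and where the small control property of Assumption~\ref{assump:small_control_prop} would additionally enter if one wanted the stronger conclusion of asymptotic stability as in Theorem~\ref{thm:ho-dclf_lyapstab}.
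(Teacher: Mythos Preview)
Your proposal is essentially what the paper intends: Proposition~\ref{prop:construc_control} is not given a separate proof in the paper at all, but is introduced with ``The above proof is constructive; it provides us with a control sequence which steers the state to zero. This is summarized in the following proposition.'' So the paper's ``proof'' is simply a pointer back to the argument of Theorem~\ref{thm:ho-dclf-stab}, and your plan of reading the three assertions off that construction is exactly right. Your verification of feasibility via the membership conditions $\boldsymbol{\nu}^{k_n}_{[l:q+l-1]}\in\mathbf{U}_{[0:q-1]}(x^{k_n+l})$ is correct, as is your observation that the ``label'' controls $\mathbf{u}^{k_{n+1}}_{[0:q-1]}$ need not coincide with the inputs actually implemented.

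Where you are in fact \emph{more} careful than the paper is in flagging the passage from $x^{k_n}\to 0$ to $x^k\to 0$ for the full trajectory. The paper's contradiction argument in Theorem~\ref{thm:ho-dclf-stab} asserts that $x^{k_n}\neq 0$ for all $n$ implies $\delta>0$, which is already a slightly loose step, and Proposition~\ref{prop:construc_control} is then stated without further comment on the intermediate states. Your proposed remedy---uniform block length $\leq m$, continuity of $f$ with $f(0,0)=0$, and smallness of the applied inputs---is the natural route; note, however, that you need the \emph{applied} inputs $\boldsymbol{\nu}^{k_n}_{[0:l^{k_n}-1]}$ to go to zero, not merely to be bounded, and these are not the components of $\boldsymbol{\omega}^{k_n}$ that are controlled by $V(\boldsymbol{\omega}^{k_n})\to 0$. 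Closing this cleanly would require an additional argument (or an appeal to something like Assumption~\ref{assump:small_control_prop}), which the paper does not supply either.
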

The way the above control sequence was constructed and is used to steer the state gives rise to its time and state dependency. Thus, considering control system~\eqref{eqn:control_system} with the control strategy~\eqref{eqn:basecontrol} results in a time-varying system, explaining the used stability notion.
It remains to prove Theorem~\ref{thm:ho-dclf_lyapstab}. 
\begin{proof}[Proof of Theorem~\ref{thm:ho-dclf_lyapstab}]
This proof is along the lines of classical Lyapunov theory \cite{HK:02}. To ease notation, we  will present it again for the case $m=2$; the general case follows similarly. Theorem~\ref{thm:ho-dclf-stab} already guarantees that any state $x^{k_0}$ can be steered to zero. To obtain asymptotic stability, we still need to show that for all $\varepsilon>0$ and any $k_0 \in \mathbb{N}$, there exists $\bar \delta>0$ such that for any state $\inistatetimev \in X$ with $0 <\|\inistatetimev\|<\bar \delta$, there exists a feasible control strategy which yields $\|x^k\|<\varepsilon$, for all $k\geq k_0$. To this end, fix $\varepsilon> 0$ and $k_0 \in \mathbb{N}$. Due to the radial unboundedness of $V$ by~\eqref{eqn:cond_V_alpha}, we can choose $\mathbf{0} \neq \mathbf{p} \in \mathbb{R}^n \times \mathbb{R}^{p, q}$ such that there exists $r \in (0,\varepsilon)$, where $\|(x,\boldu)\|> r$ implies $V(x,\boldu) > V(\mathbf{p})$, or equivalently,
\begin{align}\label{eqn:rad_unbdd_proof}
    V(x,\boldu) \leq V(\mathbf{p}) \Rightarrow \|(x,\boldu)\|\leq r.
\end{align}
Now define $B_r:=\{(x,\boldu) \in \mathbb{R}^n \times \mathbb{R}^{p, q}: \|(x,\boldu)\|\leq r\}$, $\beta := V(\mathbf{p})>0$ and $\Omega_\beta := \{(x,\boldu) \in B_r: V(x,\boldu) \leq \beta\}$. It follows $\Omega_\beta \subseteq B_r$.
Suppose for now we have 
\begin{align}\label{eqn:Vlessminbeta}
    V(x^{k_0},\boldu^{k_0}_{[0:q-1]})< \min\{0.5\beta \sigma_1,0.5\beta \sigma_2,\beta\}    
\end{align}
for some $x^{k_0}\in X\setminus \{0\}$ and $\boldu^{k_0}_{[0:q-1]} \in \mathbf{U}_{[0:q-1]}(x^{k_0})$. Then we claim that there exists a control strategy such that $V(x^{k}, \mathbf{u}^{k}_{[0:q-1]}) \leq \beta$ for all $k \geq k_0$. Together with equation~\eqref{eqn:rad_unbdd_proof}, this would imply that $(x^k,\boldu^{k}_{[0:q-1]})$ stays in $\Omega_\beta$ for $k\geq k_0$. To prove the claim, let us first consider the case $\sigma_1 \leq \sigma_2$. Clearly, $V(x^{k_0}, \boldu^{k_0}_{[0:q-1]}) < 0.5\beta\sigma_1$. Since $V$ is a \hodclf~there exists a control strategy $\boldsymbol{\nu}_{[0:q+1]}$ such that 
\begin{align*}
    \sigma_2V(x^{k_0+2},\boldsymbol{\nu}_{[2:q+1]}&) + \sigma_1 V(x^{k_0+1},\boldsymbol{\nu}_{[1:q]}) < 2V(x^{k_0},\boldu^{k_0}_{[0:q-1]}) < 2 \frac{\beta\sigma_1}{2} = \beta\sigma_1.
\end{align*}
Hence, $V(x^{k_0+1},\boldnu_{[1:q]}) < \beta$. Similarly, $V(x^{k_0+2},\boldnu_{[2:q+1]})$ $<\beta$.
The case $\sigma_1 > \sigma_2$ follows likewise. By Proposition~\ref{thm:ex_subsqn_stability}, either $V(x^{k_0+1},\boldnu_{[1:q]})$ or $V(x^{k_0+2},\boldnu_{[2:q+1]})$ is less than $V(x^{k_0}, \boldu^{k_0}_{[0:q-1]})$ and determines the index of the descent $\ell \in \{1,2\}$ (compare with proof of Theorem~\ref{thm:ho-dclf-stab}). If $\ell=1$, we have found $x^{k_0+1}$ and $\boldu^{k_0+1}_{[0:q-1]}:=\boldnu_{[1:q]}$, if $\ell=2$, we have also found $x^{k_0+2}$ and $\boldu^{k_0+2}_{[0:q-1]}:=\boldnu_{[2:q+1]}$. The above argument can be repeated with the new state control pair $(x^{k_0+\ell},\boldu^{k_0+\ell}_{[0:q-1]})$. Repeating this argument proves the claim. We hence conclude that  
\[
(x^k, \boldu^k_{[0:q-1]}) \in \Omega_\beta \Rightarrow (x^k, \boldu^k_{[0:q-1]}) \in B_r \quad \forall k \geq k_0. 
\]
Thus, we obtain $\|x^{k}\| \leq \|(x^{k},\boldu^{k}_{[0:q-1]})\| \leq r < \varepsilon$ for all $k\geq k_0$. It remains to guarantee that we can find $x^{k_0}\in X\setminus \{0\}$ and $\boldu^{k_0}_{[0:q-1]} \in \mathbf{U}_{[0:q-1]}(x^{k_0})$ with 
\begin{align*}
V(x^{k_0}, \boldu^{k_0}_{[0:q-1]}) < \min\{0.5\beta\sigma_1,0.5\beta\sigma_2,\beta\}.    
\end{align*}
Due to the fact that $V$ is continuous and $V(0, \mathbf{0})=0$, there exists $\delta>0$ such that $\|(x,\boldu_{[0:q-1]})\| < \delta$ implies 
\begin{align*}
 V(x,\boldu_{[0:q-1]}) < \min\{0.5\beta\sigma_1,0.5\beta\sigma_2,\beta\}. 
\end{align*}
If $q=0$, then the continuity of $V$ is enough, for $q\neq0$ we need to evoke Assumption~\ref{assump:small_control_prop}. It guarantees that for all $\bar \varepsilon>0$, there exists $\bar \delta>0$ such that for all $0<\|x^{k_0}\|<\bar \delta$ there exists $\boldu^{k_0}_{[0:q-1]} \in \mathbf{U}_{[0:q-1]}(x^{k_0})$ with $\|\boldu^{k_0}_{[0:q-1]}\|<~\bar\varepsilon$ satisfying the inequalities~\eqref{eqn:cond_V_alpha} and~\eqref{eqn:dclf_orderm}. Thus, for small enough $\bar \varepsilon$ we have $\bar \varepsilon + \bar \delta< \delta$ and obtain that for any $x^{k_0} \in X$ with $0<\|x^{k_0}\|<\bar \delta$ there exists $\boldu^{k_0}_{[0:q-1]} \in~\mathbf{U}_{[0:q-1]}(x^{k_0})$ such that
\[ 
\|(x^{k_0}, \boldu^{k_0}_{[0:q-1]})\| \leq \|x^{k_0}\| + \|\boldu^{k_0}_{[0:q-1]}\| < \bar \delta +\bar \varepsilon < \delta.
\]
We can deduce that the Lyapunov function value of $(x^{k_0},\boldu^{k_0}_{[0:q-1]})$ satisfies the desired bound~\eqref{eqn:Vlessminbeta}. In summary, if $\inistatetimev \in X$ satisfies $0<\|\inistatetimev\|<\bar \delta$, then $\|x^k\|<\varepsilon$ for all $k \geq k_0$. This completes the proof. 
\end{proof}

The proofs of Theorem~\ref{thm:algo_convtozero} and~\ref{thm:algo_conv} can be found in Appendix~\ref{sec:appendix}.

\section{Applications to nonholonomic systems}\label{sec:application_nonholo_sys}
In this section, we demonstrate the performance of our algorithm for a nonholonomic system. In particular, we consider a variation of the classical Brockett integrator~\cite{RB:83} in discrete-time. Let $x^0 \in \mathbb{R}^4$ be the initial state, $h=0.1$ the sampling time and $u^{j}=[u^j_1\ u^j_2]^T \in \mathbb{R}^2$ the control input at time step $j$. The dynamics of the state $x^{j+1} = [x^{j+1}_1\ x^{j+1}_2\ x^{j+1}_3\ x^{j+1}_4]^T \in \mathbb{R}^4$ at time step $j+1$ are given by 
\begin{align}
\begin{split}\label{sys:plus_000abs}
        x^{j+1} &= x^{j} + h\hspace{-0.1cm}
        \begin{bmatrix}
        1\\ 
        0\\
        -x_2^j\\ 
        x_3^j
        \end{bmatrix}
        u^j_1 + h\hspace{-0.1cm}
        \begin{bmatrix}
        0\\ 
        1\\
        x^j_1\\
        x_2^j
        \end{bmatrix}
        u^j_2 +h\hspace{-0.1cm}
        \begin{bmatrix}
        0\\ 
        0\\
        0\\
        |x_4^j|
        \end{bmatrix}\hspace{-0.13cm}.
\end{split}
\end{align}
It is worth pointing out that in \eqref{sys:plus_000abs} we have added the last term to ``break the symmetry'', inducing the need for a richer class of control inputs. This system is difficult to stabilize because it does not satisfy Brockett's necessary condition as stated in the next proposition. Therefore, there does \textit{not} exist a smooth state feedback control law which renders $(0,0)$ locally asymptotically stable.
\begin{proposition}\label{thm:1st_system_fails_brockett}
The control system~\eqref{sys:plus_000abs} does not satisfy Brockett's necessary condition.
\end{proposition}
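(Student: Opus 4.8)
The plan is to invoke Brockett's necessary condition directly, showing that the vector field underlying the discretization~\eqref{sys:plus_000abs} fails the required openness at the origin. Recall that Brockett's condition asserts that if the origin can be rendered asymptotically stable by a continuous state feedback, then the right-hand side map $f\colon \mathbb{R}^n\times\mathbb{R}^p\to\mathbb{R}^n$ must be open at the origin, i.e. the image of every neighborhood of $(x,u)=(0,0)$ must contain a neighborhood of $0\in\mathbb{R}^n$. So the first step is to strip off the Euler step $x + h(\cdots)$ and work with the bracketed vector field
\[
f(x,u)=\begin{bmatrix}u_1\\ u_2\\ -x_2u_1+x_1u_2\\ x_3u_1+x_2u_2+|x_4|\end{bmatrix},
\]
whose openness at the origin is what the condition concerns.

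The key step is then to exhibit targets arbitrarily close to $0\in\mathbb{R}^4$ that lie outside the image of $f$ for \emph{every} choice of $(x,u)$. First I would note that the first two components force $u_1=f_1$ and $u_2=f_2$; hence any point $y=(y_1,y_2,y_3,y_4)$ in the image with $y_1=y_2=0$ must arise from $u_1=u_2=0$. Substituting $u_1=u_2=0$ into the remaining components gives $f_3=0$ and $f_4=|x_4|\ge 0$. Consequently no point of the form $(0,0,0,-\varepsilon)$ with $\varepsilon>0$ belongs to the image, regardless of $x$ and $u$.

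Since such excluded points accumulate at the origin, every neighborhood of $0\in\mathbb{R}^4$ contains points outside the image of $f$, and a fortiori outside the image of $f$ restricted to any neighborhood of $(0,0)$. This contradicts openness at the origin, so Brockett's necessary condition fails. I would likely remark that the same conclusion already follows from the third component alone --- setting $u_1=u_2=0$ forces $f_3=0$, so the points $(0,0,\pm\varepsilon,0)$ are missed exactly as in the classical Brockett integrator --- and that the symmetry-breaking term $|x_4|$ merely supplies an additional, one-sided obstruction along the fourth coordinate.

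I expect the only genuine subtlety to be the bookkeeping between the discrete-time model~\eqref{sys:plus_000abs} and the continuous-time statement of Brockett's condition: one must be explicit that it is the bracketed vector field (the direction of the increment), not the full update map $x\mapsto x+h(\cdots)$, whose openness is at issue, and that dividing by the fixed step $h>0$ leaves the openness argument unchanged. Everything else reduces to the elementary algebraic observation that $u_1=u_2=0$ collapses the third and fourth components, which is a short computation rather than a real obstacle.
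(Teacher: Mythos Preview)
Your proposal is correct and follows essentially the same route as the paper: both compute the increment map, observe that the first two components force $u_1=u_2=0$, and then note that the fourth component becomes $h|x_4|\ge 0$, so targets of the form $(0,0,0,-\varepsilon)$ are unreachable. The only presentational difference is that the paper invokes a discrete-time formulation of Brockett's condition (from Lin--Byrnes) for the map $\varphi(x,u)=f(x,u)-x$ directly, whereas you phrase it via the continuous-time openness statement applied to the bracketed vector field and then remark that dividing out the fixed step $h>0$ is harmless; the algebraic obstruction is identical.
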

The proof of this result can be found in Appendix~\ref{sec:appendixB}. The difficulty in stabilizing the above system makes it suitable for our scheme. In particular, standard MPC schemes with quadratic costs are known to encounter issues in such settings~\cite{MM-KW:17}. Before we look at the simulation results, we will first propose a \hodclf~for system~\eqref{sys:plus_000abs} in the next section.
\subsection{Choice of generalized control Lyapunov function}
We take the function $V: \mathbb{R}^n \to \mathbb{R}, x \mapsto \|x\|^2$ as a \hodclf~candidate, corresponding to the case $q=0$. Although system~\eqref{sys:plus_000abs} does \textit{not} satisfy Brockett's necessary condition, we are able to decrease the state in multiple steps with respect to $\|\cdot\|^2$. The multiplicity of steps is the key feature that enables the descent. In particular, let $e_i \in \mathbb{R}^4$ be the unit vector with the $i$th entry of one. Suppose that initially $x^0 = e_4$. Hence, we wish to decrease the fourth component of the state. According to our dynamics, one way to do so is to set $u^0_1=0$ and to choose $u^0_2=5$ so that we increase $x^1_2$: 
\begin{align*}
    x^1 &= e_4
        + 0 e_1
        + 0.5e_2
        + 0.1e_4 = \begin{bmatrix}
        0 & 0.5 & 0 & 1.1 \end{bmatrix}^T\hspace{-0.3cm}.
    \intertext{This increases the squared norm of the state after one step, but in the second step we are able to reduce $x^2_4$ through $x_2^1$. By choosing $u^1_1=0$ and $u^1_2=-u^0_2$, we reverse the previous increase of the second component of the state: }
    x^2 &= \begin{bmatrix}
        0 & 0.5 &0 & 1.1 \end{bmatrix}^T
        + 0\begin{bmatrix}
        1& 0 & -0.5 &0 \end{bmatrix}^T - 0.5 \begin{bmatrix}
        0& 1&0 &0.5 \end{bmatrix}^T\hspace{-0.1cm}
        + 0.11 e_4 = 0.96e_4.
\end{align*}
Note that $\|x^0\|^2=1, \|x^1\|^2 = 1.46$ and $\|x^2\|^2=0.9216$, demonstrating a descent in two steps. The above control strategy obviously strongly depends on the initial state. These computations motivate the approach of using multiple steps to decrease a Lyapunov function value. Our analytical computations match Figure~\ref{fig:desc_in_10_steps}, where a control strategy found by simulations decreases the squared norm of the states in ten steps.

In summary, the above observations demonstrate that the function value of $\|\cdot\|^2$ can be decreased in multiple steps along trajectories of system~\eqref{sys:plus_000abs}. Therefore, we choose $V(x)= \|x\|^2$ as the \hodclf~for system~\eqref{sys:plus_000abs}.   
\begin{figure}
\centering
    \includegraphics[width=0.6\linewidth]{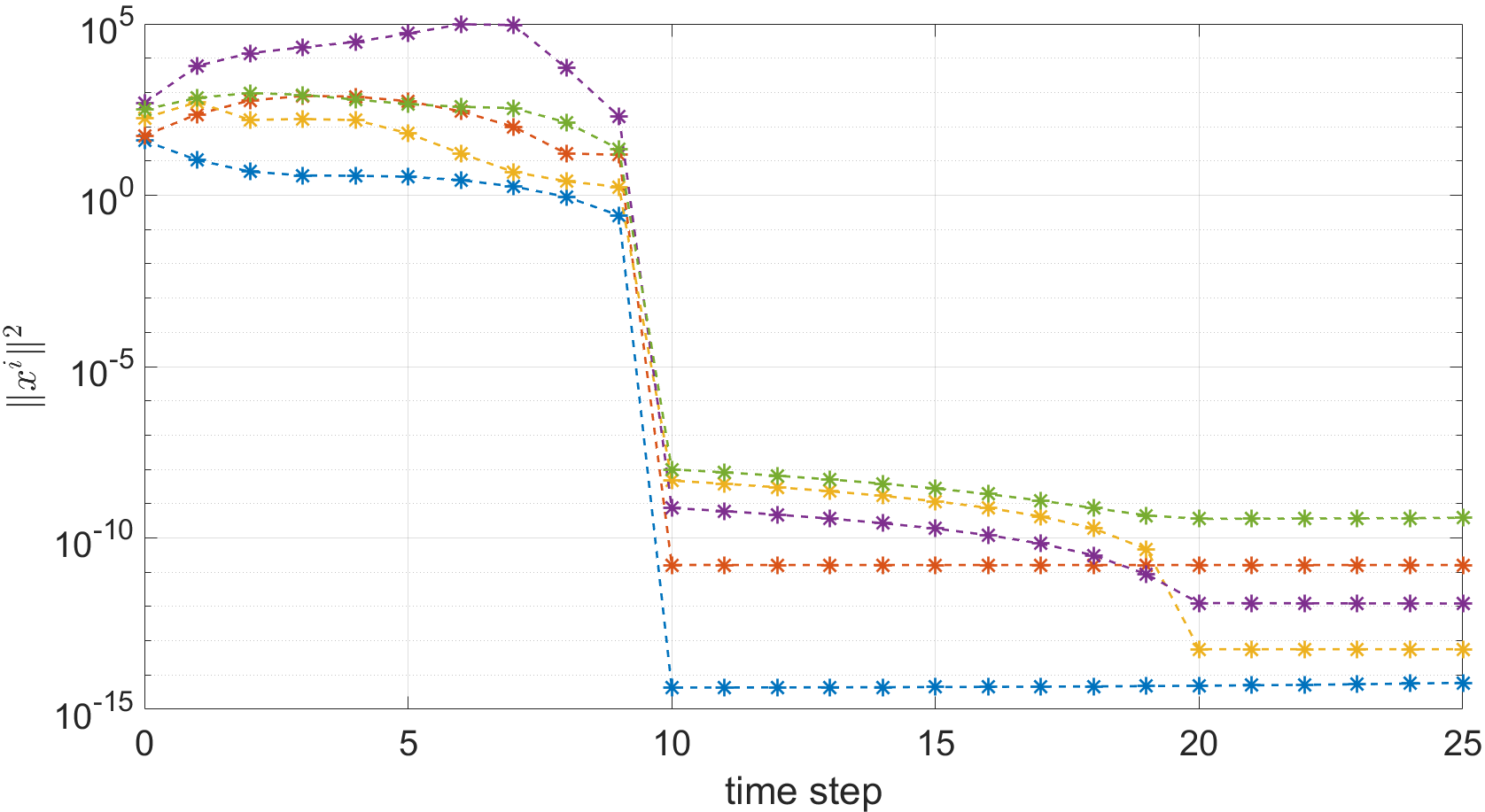}
    \caption{The function value of $\|x\|^2$ along trajectories of system~\eqref{sys:plus_000abs}. The function value of five initial states, which were chosen randomly from a normal distribution, was successfully decreased in ten steps.}
    \label{fig:desc_in_10_steps}
\end{figure}
\subsection{Simulation results}\label{sec:simu_results}
Let us investigate Algorithm \ref{algo:ho-dclf}, when applied to the following optimal control problem.
\begin{problem}\label{prob:simulation}
\begin{align*}
\begin{aligned}
    \min &\sum_{j=0}^{N_p-1} \|x^j\|^2 + 5\,\|u^{j}\|^2\\
    \mathrm{s.t.} &\mathrm{~\eqref{sys:plus_000abs}~ holds~with~}  x^0 = x\\
    &\frac{5.5}{10}\Big(\|x^{6}\|^2 + \|x^{5}\|^2 + \|x^{4}\|^2 +\|x^{3}\|^2\Big) - \|x^{0}\|^2  \leq -\varepsilon\|x^0\|^4
        \end{aligned}
\end{align*}
\end{problem}
We take the prediction horizon to be $N_p=10$. The initial state is set to be $[1\ 2\ 3\ 5]^T$ and we use
\begin{align*}
\begin{bmatrix}
    1 & 1&  \dots & 1 \\1 & 1 & \dots & 1 
    \end{bmatrix} \in \mathbb{R}^{2,10}
\end{align*}
as the initial control. As discussed, we choose $V(x) = \|x\|^2$ as the \hodclf. In step four of Algorithm~\ref{algo:ho-dclf}, we find an index $\ell_{decr}\in \{1,\dots,m\}$ which achieves a descent for this \hodclf. We do so by choosing the index $\ell_{decr}$ that corresponds to the minimal predicted Lyapunov function value $V(x^{\ell_{decr}})$. To achieve a descent, we choose $\alpha(x) = \varepsilon\|x\|^4$, where $\varepsilon = 10^{-5}$. Although only the weights of the sixth, fifth, fourth and third Lyapunov function value are non-zero, we choose $m=N_p=10$ for \adc. This means, in every iteration of Algorithm \ref{algo:ho-dclf} we can implement up to ten steps. In our implementation {\tt fmincon} from MATLAB is used to solve Problem~\ref{prob:simulation}.

The solution to Problem~\ref{prob:simulation} computed by flexible-step MPC is shown in Figure \ref{fig:control_10adc}.
\begin{figure}
\includegraphics[width=0.6\linewidth]{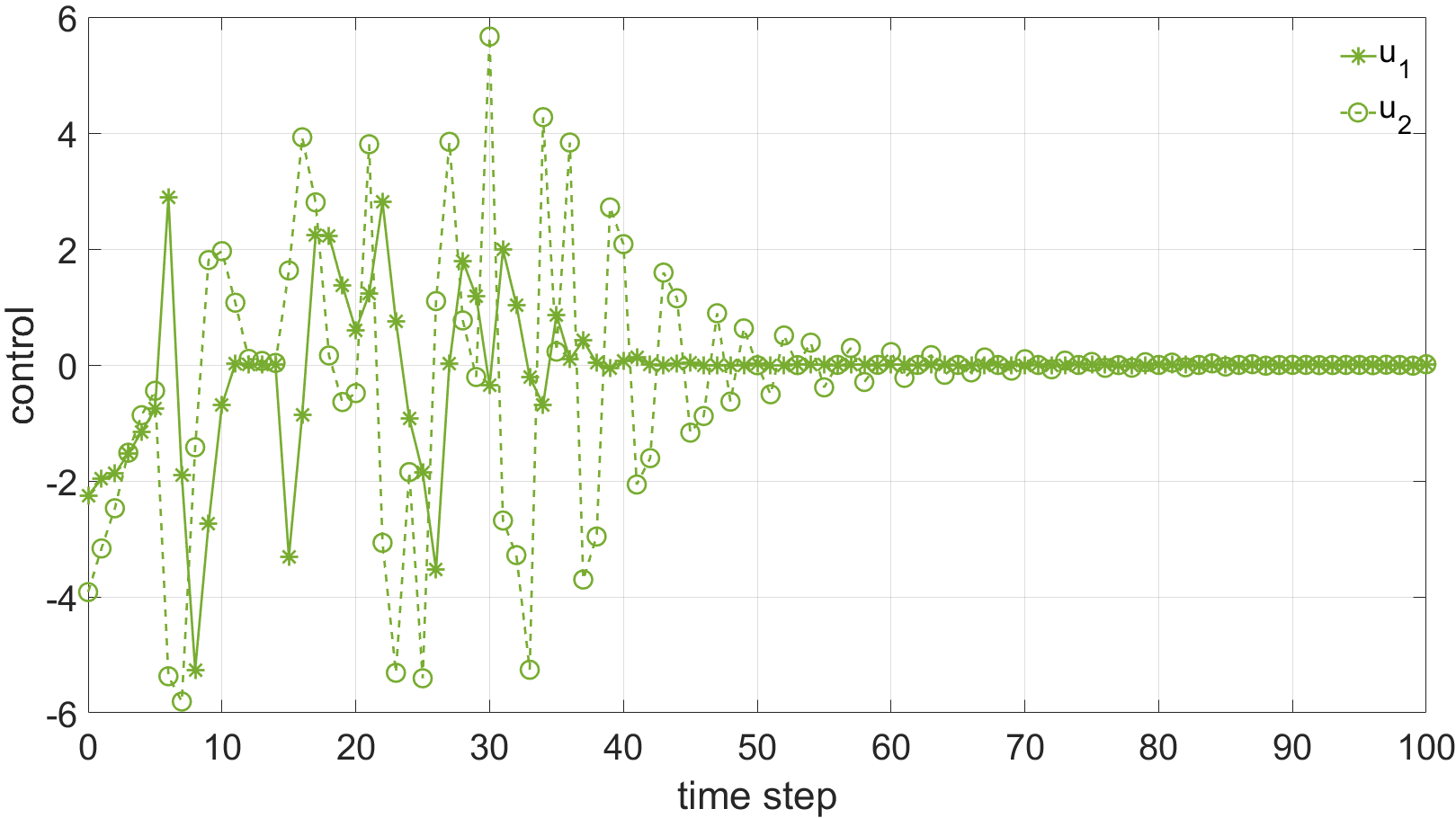}
\caption{Solution to Problem~\ref{prob:simulation}}
\label{fig:control_10adc}
\end{figure}
The resulting trajectories of the states are shown in Figure \ref{fig:state_10adc}. We observe that after a transient phase, the states successfully converge to zero.
\begin{figure*}
\begin{subfigure}[b]{0.495\textwidth}
\includegraphics[width=1.0\linewidth]{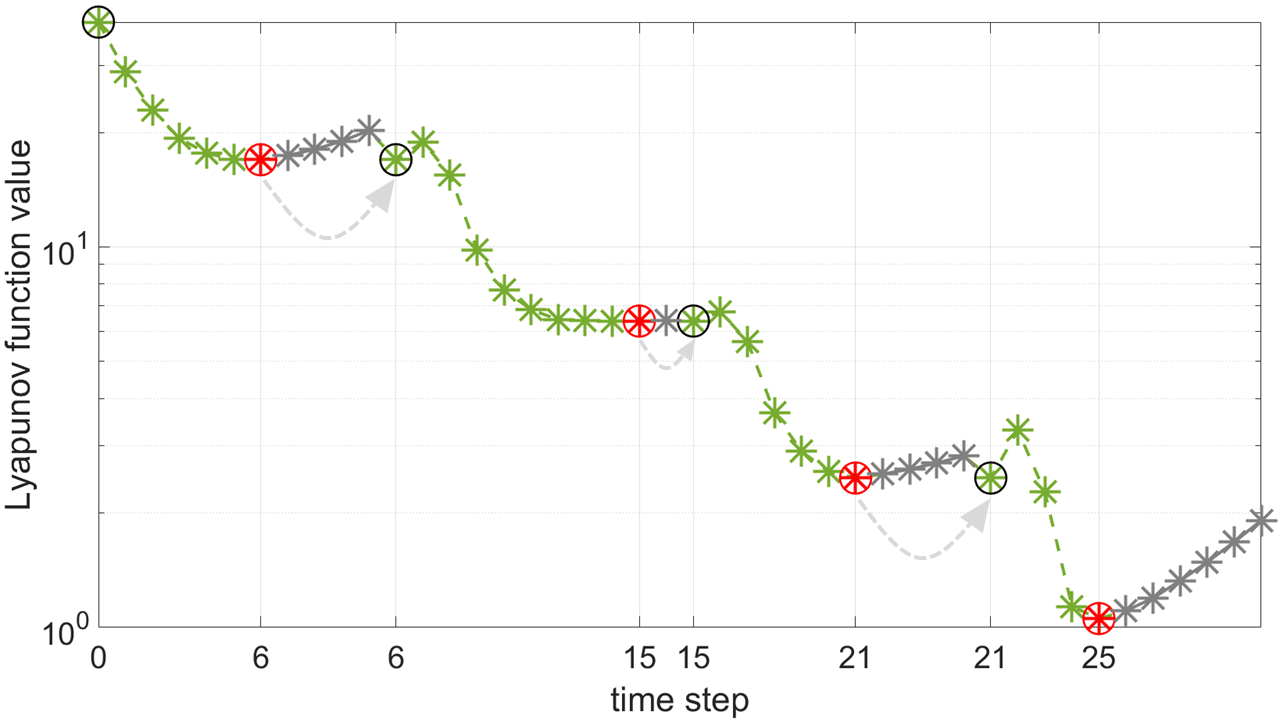}
\caption{First period}
\label{fig:lyap_fcn_10adc_detail}
\end{subfigure}
\begin{subfigure}[b]{0.495\textwidth}
\includegraphics[width=1.0\linewidth]{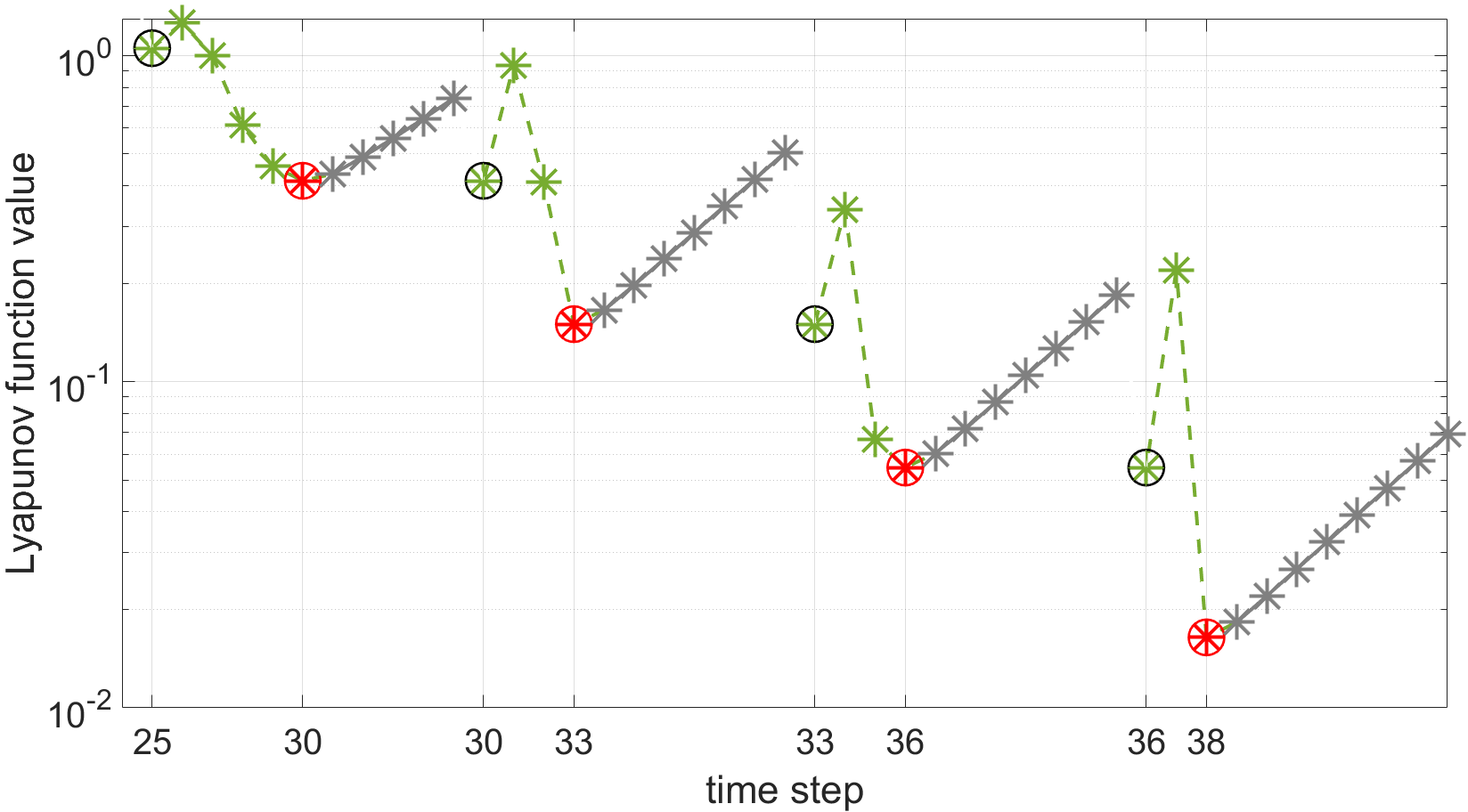}
\caption{Second period}
\end{subfigure}
\caption{Two periods of the Lyapunov function value according to the solution of Problem~\ref{prob:simulation}. The Lyapunov function value of the initial state in the current iteration is circled in black. It is followed by the $m=10$ Lyapunov function values over the prediction horizon. The minimal Lyapunov function value of each iteration is highlighted in red. The dashed arrow symbolizes that the state, which achieves the minimum, will be the initial state for the next iteration, while the gray Lyapunov function values following the minimum will be discarded. This is why the time axis is unconventional; the time does not continue until the new iteration with the just declared initial state starts.}
\label{fig:lyap_fcn_10adc}
\end{figure*}
\begin{figure}
\includegraphics[width=0.6\linewidth]{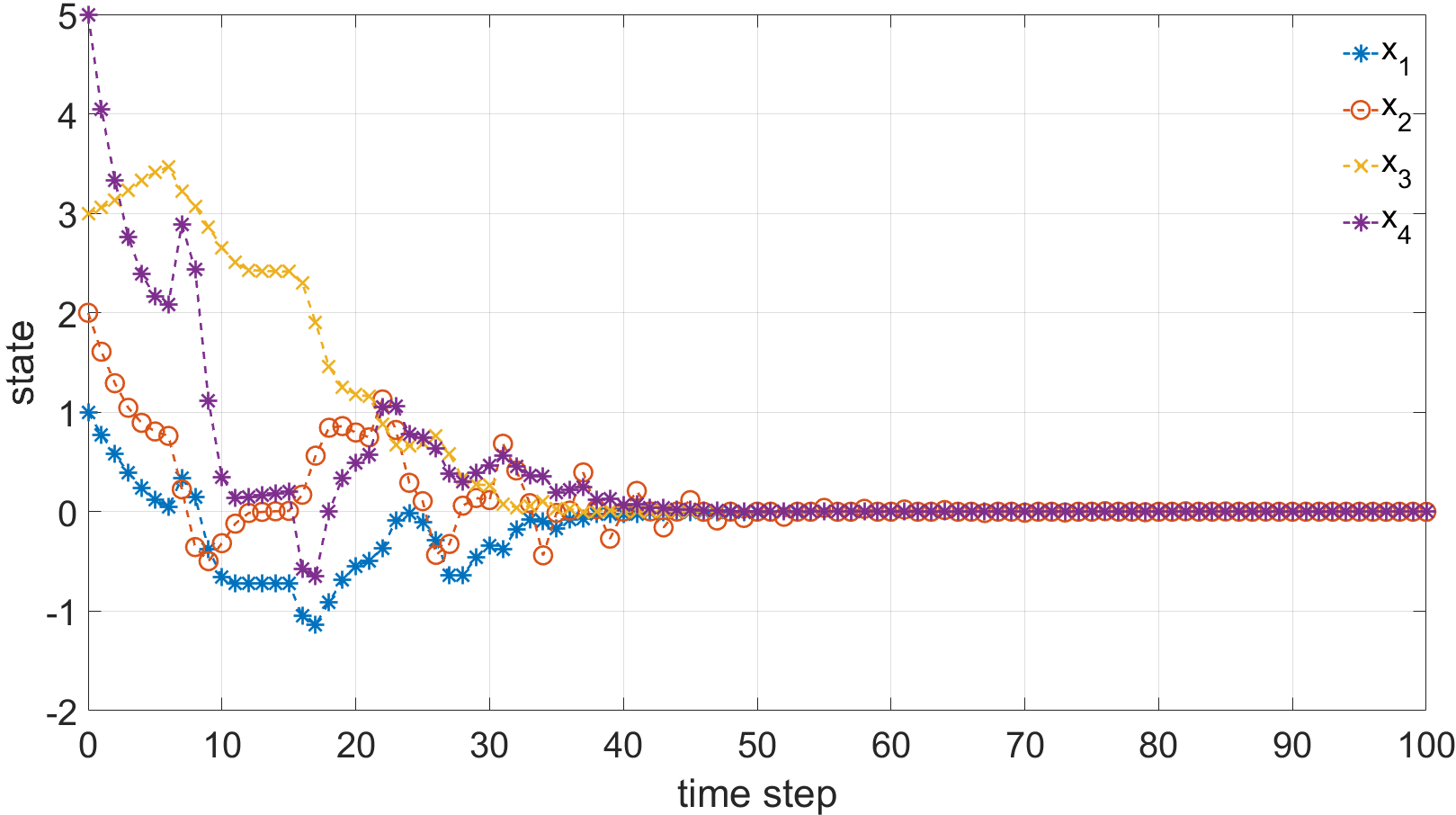}
\caption{State trajectories according to the solution of Problem~\ref{prob:simulation}}
\label{fig:state_10adc}
\end{figure}
In Figure \ref{fig:lyap_fcn_10adc}, we can see the Lyapunov function values for two selected periods; the outcome of \adc~can be clearly recognized in this figure. The Lyapunov function value of the initial state of the current iteration is circled in black. The red star indicates the minimum of the Lyapunov function values in every iteration. The Lyapunov function values following the minimum are gray because they will be discarded in our implementation. Instead, the controls will be implemented until the minimum occurs and the resulting state will be the new initial state. Displaying the Lyapunov function values over the whole prediction horizon in each optimization instance results in an unconventional time axis. For example, in Figure~\ref{fig:lyap_fcn_10adc_detail} the minimal Lyapunov function value of the first optimization instance of Problem~\ref{prob:ho-dclf} occurs at time step six and the time does not continue until after the next black circle which indicates the Lyapunov function value of the new initial state. Altogether, we can see in Figure \ref{fig:lyap_fcn_10adc} how the Lyapunov function values evolve and how various optimization instances present a different minimal index. A summary of the implemented steps at each optimization instance of Problem~\ref{prob:simulation} is given in Figure \ref{fig:loc_descent_10adc}. This figure can be interpreted as follows: The solution of the first optimization instance of Problem~\ref{prob:simulation} results in the implementation of six steps. The second optimization, which starts at $k=6$, yields the implementation of nine steps and so on.     
\begin{figure}
\includegraphics[width=0.6\linewidth]{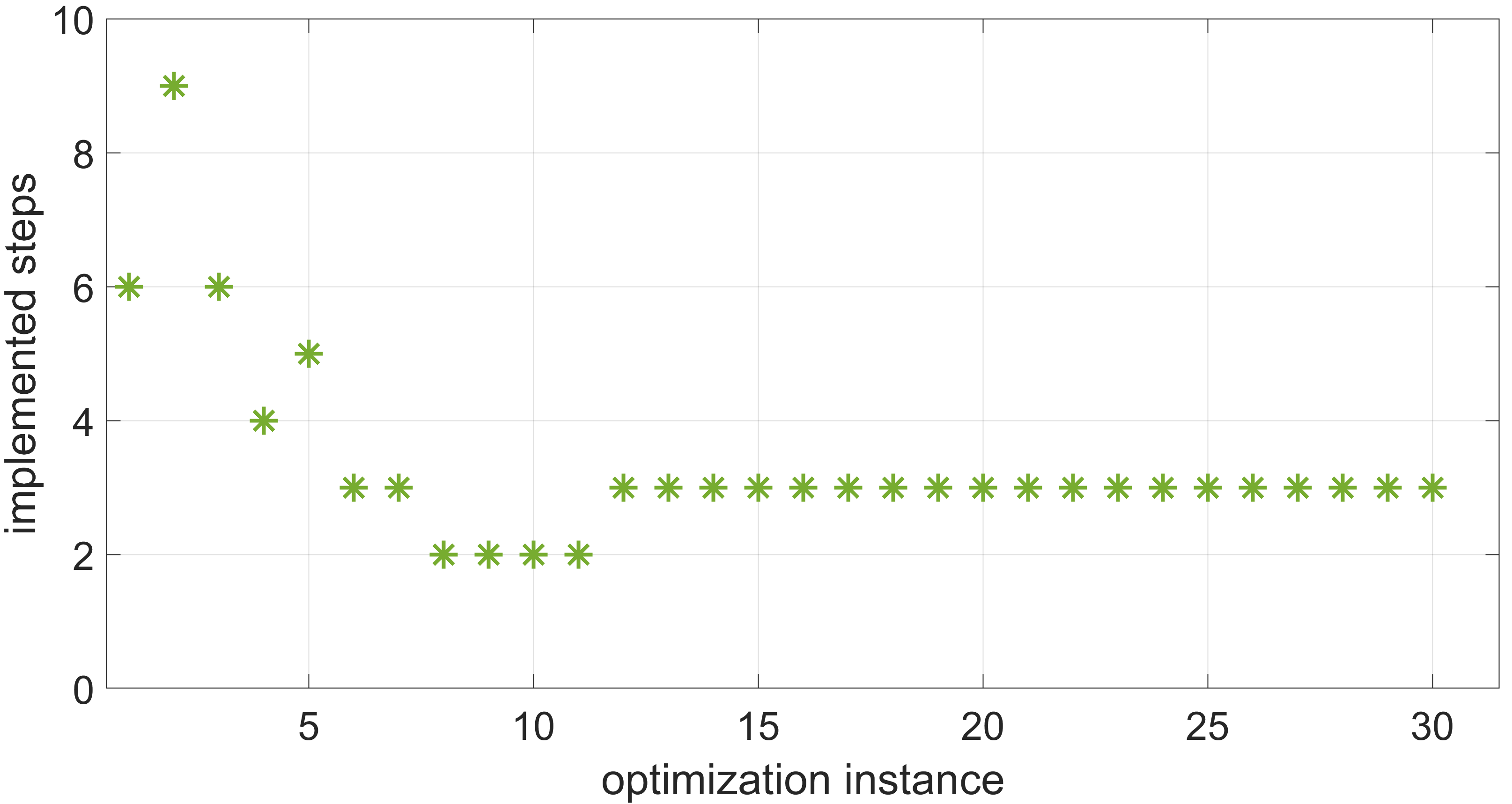}
\caption{In each optimization instance, Problem \ref{prob:simulation} is solved. The solution yields the number of implemented steps, depicted on the vertical axis. Note that the sum of implemented steps of the optimization instances corresponds to the time step of the implementation. }
\label{fig:loc_descent_10adc}
\end{figure}
Note that the algorithm is truly making use of the flexible steps in the transient phase of the state trajectory (optimization instances one to thirteen).

We will compare our solution of Problem \ref{prob:simulation} to standard MPC with a terminal cost, weighted by $\gamma > 0$, and the prediction horizon $N_p=10$, see Problem~\ref{prob:simulation_clMPC}, where a fixed number of ten steps is always implemented. 
\begin{problem}\label{prob:simulation_clMPC}
\begin{align*}
\begin{aligned}
    \min &\sum_{j=0}^{N_p-1} \|x^j\|^2 + 5\,\|u^{j}\|^2 + \gamma \|x^{10}\|^2\\
    \mathrm{s.t.} &\mathrm{~\eqref{sys:plus_000abs}~ holds~with~}  x^0 = x
        \end{aligned} 
\end{align*}
\end{problem}
We have verified through simulations that the following observations remain qualitatively the same with other numbers of implemented steps. As a first choice for the weight of the terminal cost, we take $\gamma = 22$. The state trajectories resulting from the solution to Problem~\ref{prob:simulation_clMPC} computed by standard MPC with $\gamma=22$ are depicted in Figure \ref{fig:states_termcost_22(xN)}, where we do not observe convergence to zero.
\begin{figure}
\includegraphics[width=0.6\linewidth]{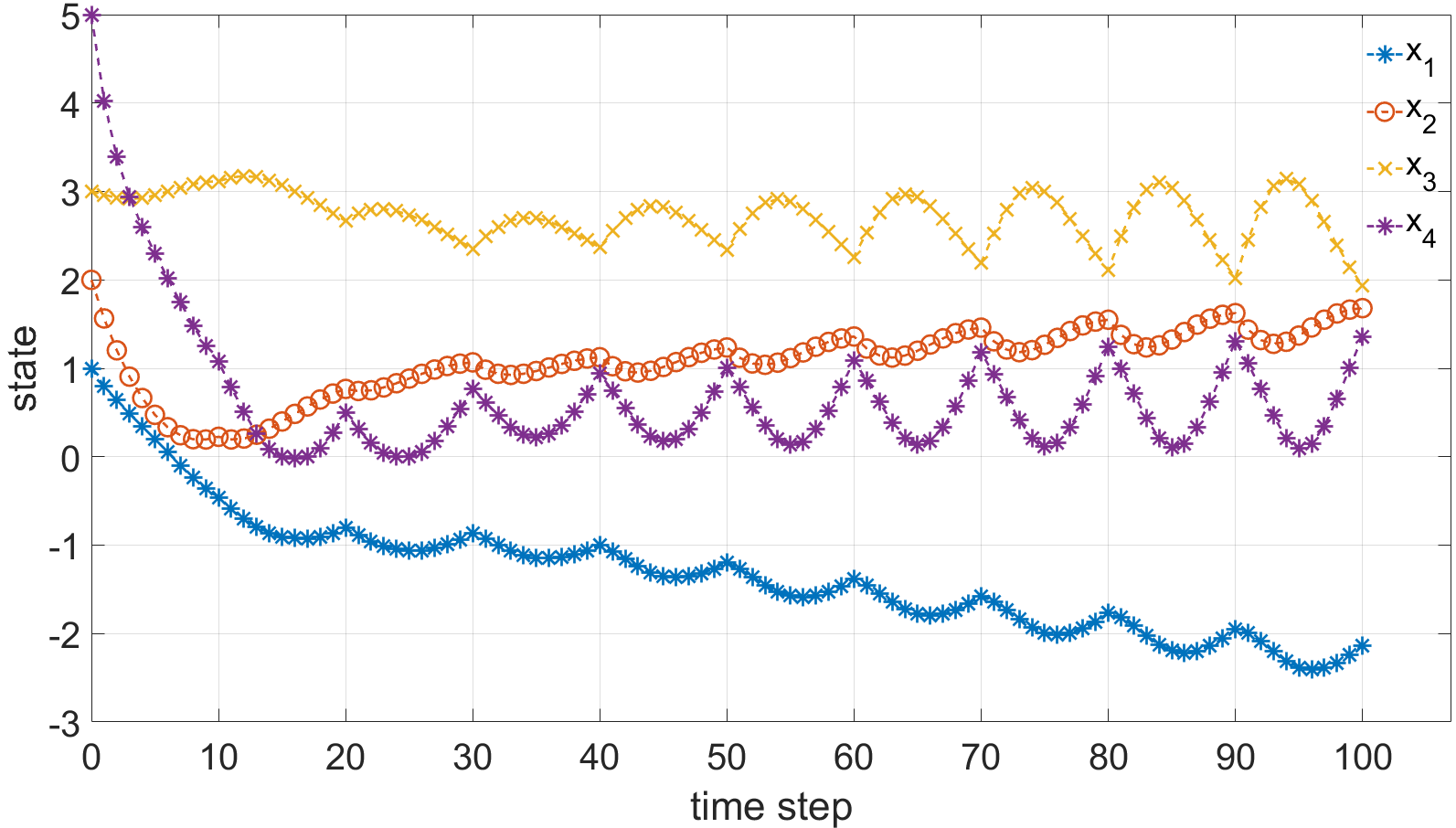}
\caption{State trajectories according to the solution of Problem~\ref{prob:simulation_clMPC} with $\gamma =22$}
\label{fig:states_termcost_22(xN)}
\end{figure}
This is why we increased the weight of the terminal cost, while keeping the same Lyapunov function $V(x)=\|x\|^2$. Increasing the weight does help, as seen in Figure \ref{fig:states_termcost_480(xN)}, but $x_2$ and $x_4$ keep oscillating.
\begin{figure}
\includegraphics[width=0.6\linewidth]{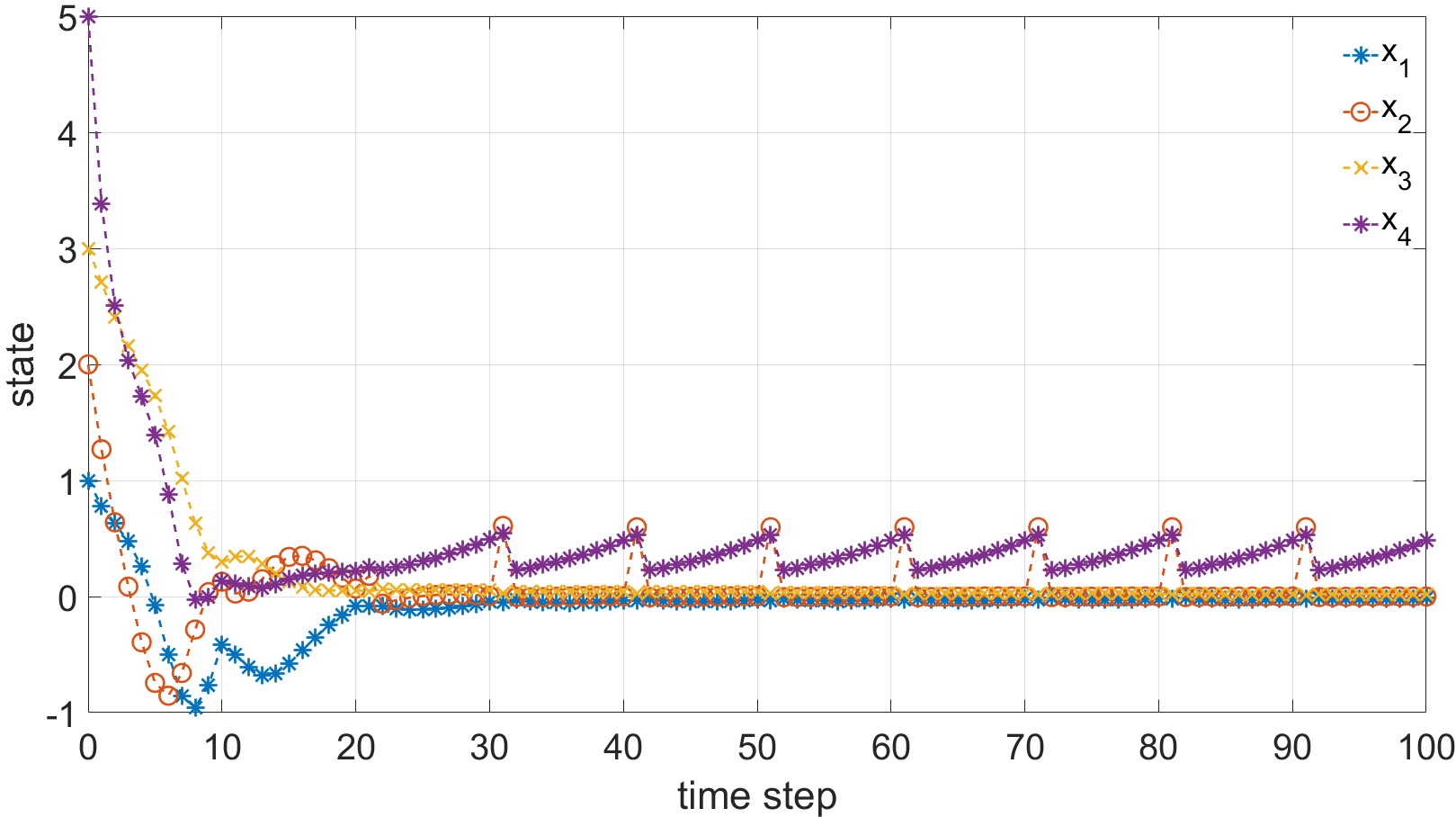}
\caption{State trajectories according to the solution of Problem~\ref{prob:simulation_clMPC} with $\gamma =480$}
\label{fig:states_termcost_480(xN)}
\end{figure}
We can increase the weight even further, see Figure \ref{fig:states_termcost_480(xN)}, but the oscillations remain.
\begin{figure}
\includegraphics[width=0.6\linewidth]{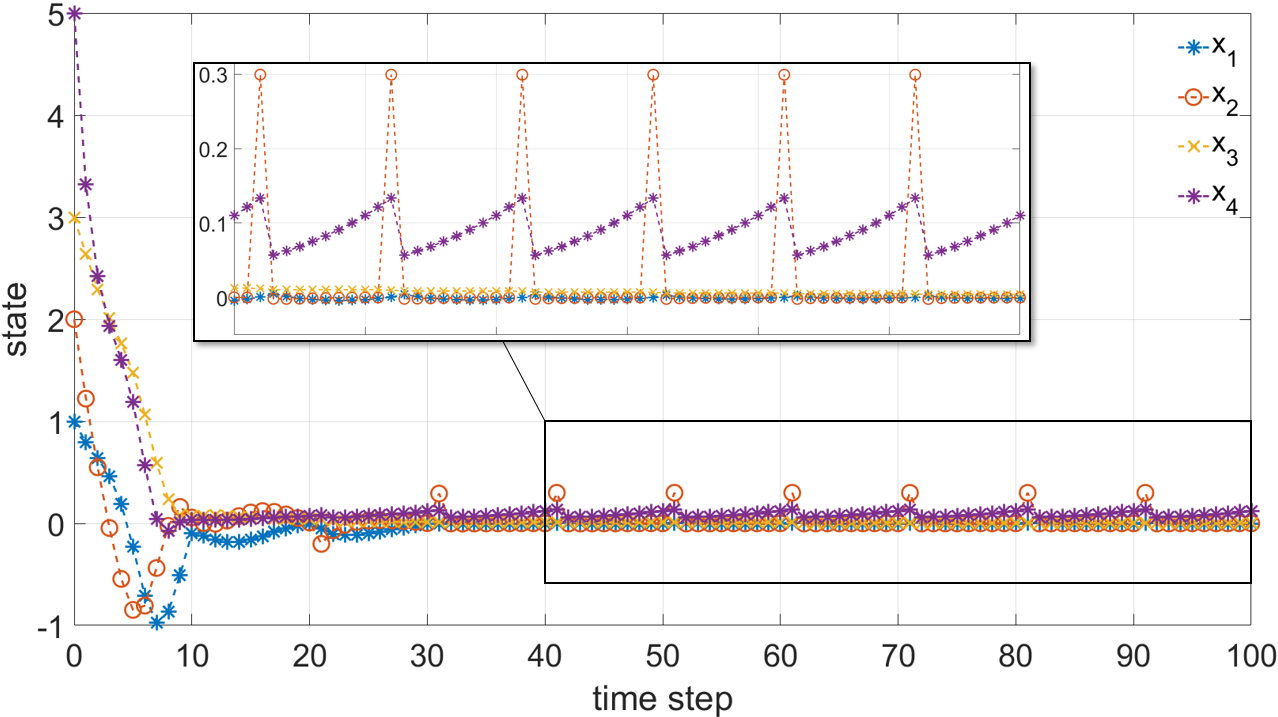}
\caption{State trajectories according to the solution of Problem~\ref{prob:simulation_clMPC} with $\gamma =1920$}
\label{fig:states_termcost_1920(xN)}
\end{figure}
Even with very high weights in the terminal cost, as in Figure~\ref{fig:states_termcost_1920(xN)}, we never reach convergence of the states to zero. The weight of the terminal cost only damps the oscillations of $x_2$ and $x_4$. In summary, the desired stability behavior was not achieved by increasing the weight of the terminal cost. Apparently, the standard approach involves more challenges in terms of terminal ingredients than our approach to obtain convergence of the state to zero, even though the same \hodclf~of $\|\cdot\|^2$ was used. Finally, we compare the total function value of the standard MPC scheme to our approach in Figure \ref{fig:total_fcn_val_compare}. The total function value is the value of $\sum_{j=0}^{k-1}\|x(j)\|^2 + 5\|u(j)\|^2 $, where $k$ is the current time step. The oscillations also have consequences here: The total function value always slightly increases because the states have not converged to zero. 

\begin{figure}
\includegraphics[width=0.6\linewidth]{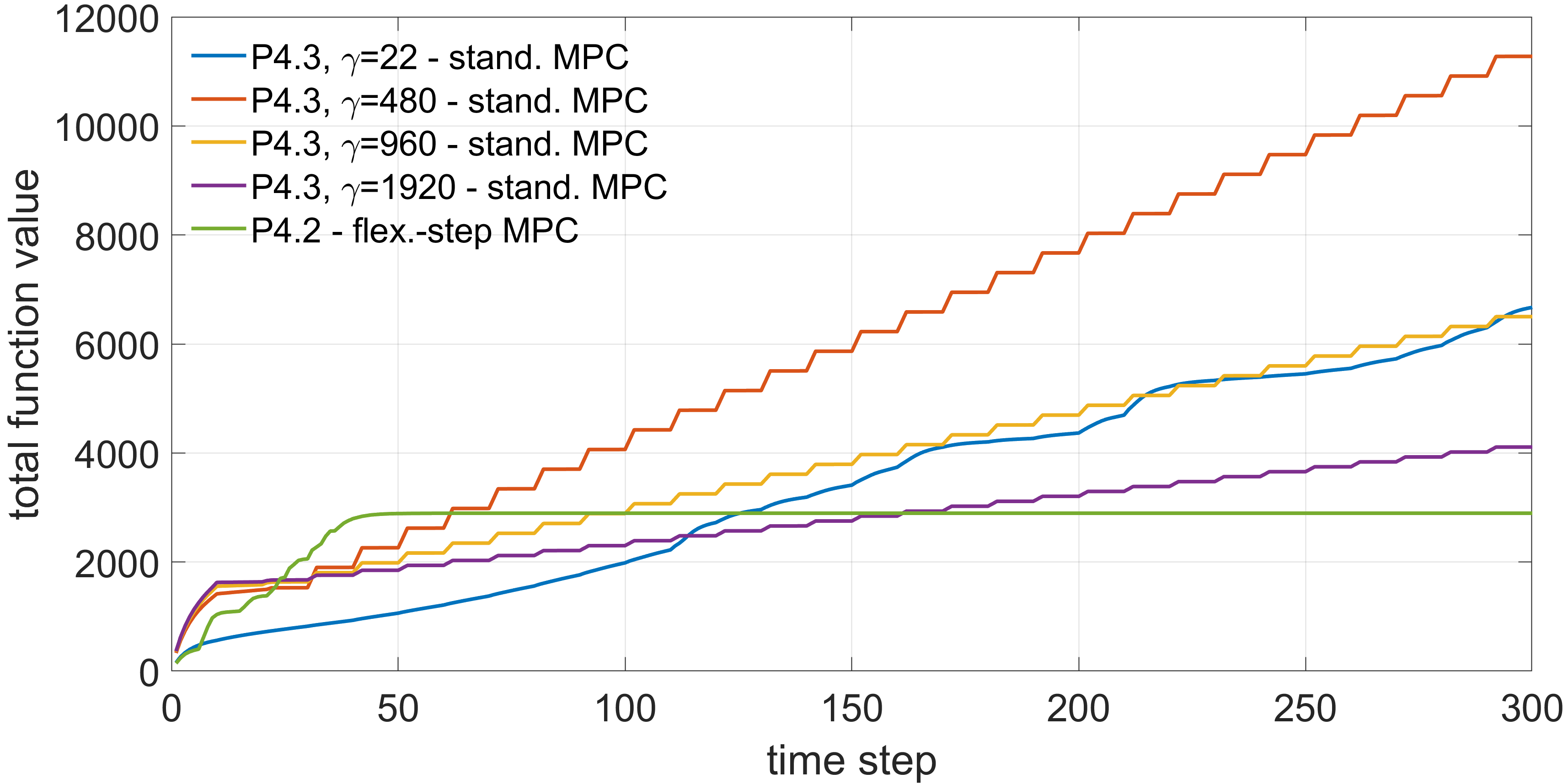}
\caption{The total function value, i.e. $\sum_{j=0}^{k-1}\|x(j)\|^2 + 5\|u(j)\|^2 $, where $k$ is the current time step, is displayed. The total function values according to the solution of Problem~\ref{prob:simulation} by flexible-step MPC and the solution of Problem~\ref{prob:simulation_clMPC} with different weights $\gamma$ by standard MPC are compared.}
\label{fig:total_fcn_val_compare}
\end{figure}

\section{Conclusion}\label{sec:conclusion}
We generalized the idea of higher-order and non-monotonic Lyapunov functions by introducing \hodclf s. Through the idea of adding the crucial condition~\eqref{eqn:dclf_orderm} from the definition of \hodclf s as a constraint, we obtained a novel MPC scheme. This constraint alone facilitated the flexibility in the number of implemented steps in each iteration of our proposed scheme. We demonstrated the benefits of this flexibility through an application example. 

There are several theoretical aspects of our work, where we would like to make further advancement in the future. One goal is to prove recursive feasibility for $q\neq N_p$, i.e. we would like to extend the result of Theorem~\ref{thm:rec_feas}. Another goal is to provide a computationally efficient way for computing the terminal set $X^{N_p}$ of Problem~\ref{prob:ho-dclf}. We would also like to find a way to bypass Assumption~\ref{assump:small_control_prop} (Small Control Property). The current choice of $\boldu^{*-}_{[0:q-1]}$ in Algorithm~\ref{algo:ho-dclf} guarantees a chain of inequalities, similar to~\eqref{eqn:Vstrictineqs}. We believe that there are other possible choices (see Remark~\ref{rkm:changeustarminus}), which we would like to investigate. Furthermore, we would like to explore the idea of using \hodclf s in the terminal cost, as opposed to incorporating them in a constraint, as well as connecting \hodclf s to classical Lyapunov functions.

Our simulations showed that variations of the classical Brockett integrator provide an area of application for this novel implementation. In that sense, it would be interesting to study the performance of our scheme for applications like mobile robots, where the underlying dynamics are nonholonomic, in the future. Moreover, it would be interesting to apply our flexible-step implementation to large scale systems, where implementing more than one step could make a significant difference for the running time. 

Other promising directions are reference tracking and generalizing our framework to systems subject to disturbances or to systems with an unknown model. It is especially attractive for the latter case; the relaxed stability requirement of \adc~enables more exploration compared to the classical one. Lastly, we would like to rigorously study the performance of our novel implementation. 

%\section*{References}

%\bibliographystyle{apalike}
\bibliographystyle{plain}
\bibliography{alias,Group-bib,Main-add,Main}

\appendix
\section{Proofs of Theorem~\ref{thm:algo_convtozero} and~\ref{thm:algo_conv}}\label{sec:appendix}
For the sake of completeness, we present the proofs of Theorem~\ref{thm:algo_convtozero} and~\ref{thm:algo_conv}. Recall that we are considering system~\eqref{eqn:control_system} with the time-varying MPC controller $c^k_{mpc}$ resulting in the time-varying system~\eqref{eqn:flexstepdynamics}. This in turn has to be studied with the appropriate stability notion. 
\begin{proof}[Proof of Theorem~\ref{thm:algo_convtozero}]
Fix $k_0 \in \mathbb{N}$ and $x(k_0) \in X$. We may assume again that $x(k_0)$ is unequal to zero. Suppose, by contradiction, that the control law generated by Algorithm~\ref{algo:ho-dclf} does not steer $x(k)$ to zero as $k$ goes to infinity. This implies that $x(k)$ can neither be equal to nor get arbitrarily close to zero for some $k \geq \initimetimev$, see proof of Theorem~\ref{thm:ho-dclf-stab}. As before, our goal is to define a subsequence $\{(x(k_n), \boldu^{k_n}_{[0:q-1]})\}_{n=0}^{\infty}$, where the Lyapunov function values evaluated at these states and controls strictly decrease. To this end, consider $x(k_0)$ as the current state of the flexible-step MPC scheme, which defines the initial state of Problem~\ref{prob:ho-dclf}, $x^0=x(k_0)$, and let $\boldu^{*-}_{[0:q-1]} \in \mathbf{U}_{[0:q-1]}(x(\initimetimev))$ be a fixed, but arbitrary, control sequence. Let $\boldu^*(x(\initimetimev))_{[0:N-1]}$ be the control sequence generated by Algorithm~\ref{algo:ho-dclf} in step three, where $N=\max\{q+m,N_p\}$. Since \adc~is satisfied, Algorithm~\ref{algo:ho-dclf} can find an index $1 \leq \ell_{decr} \leq m$ in step four for which
\begin{align*}
    &V(x^{*\ell_{decr}},\boldu^*(x(\initimetimev))_{[\ell_{decr}:q+\ell_{decr}-1]}) < V(x(\initimetimev),\boldu^{*-}_{[0:q-1]}),   
\end{align*}
where $\boldu^*(x(\initimetimev))_{[\ell_{decr}:q+\ell_{decr}-1]} \in \mathbf{U}_{[0:q-1]}(x^{*\ell_{decr}})$ (compare with Proposition~\ref{thm:ex_subsqn_stability}). In this proof, we choose the index with the greatest descent in the above inequality as $\ell_{decr}$.
Following the flexible-step MPC scheme, the control sequence $\boldu^*(x(\initimetimev))_{[0:\ell_{decr}-1]}$ is implemented and as a result, we can define the first two members of our subsequence:
\begin{align*}
    \boldsymbol{\omega}^{k_0}&:=(x(\initimetimev),\boldu^{*-}_{[0:q-1]})\\
    \boldsymbol{\omega}^{k_1}&:= (x(\initimetimev+ \ell_{decr}),\boldu^*(x(\initimetimev))_{[\ell_{decr}:q+\ell_{decr}-1]}).
\end{align*}
To define the rest of the subsequence, we proceed as follows. Suppose $\boldsymbol{\omega}^{k_n}$ has been defined. Then Algorithm~\ref{algo:ho-dclf} with the current state $x(k_n)$ produces again a control sequence $\boldu^*(x(k_n))_{[0:N-1]}$ such that for some $1 \leq \ell_{decr}\leq m$ the Lyapunov function value is strictly below $V(\boldsymbol{\omega}^{k_n})$ and $\boldu^*(x(k_n))_{[\ell_{decr}:q+\ell_{decr}-1]} \in \mathbf{U}_{[0:q-1]}(x^{*\ell_{decr}})$. We can define $\boldsymbol{\omega}^{k_{n+1}}$ accordingly
\begin{align*}
    \boldsymbol{\omega}^{k_{n+1}} &:= (x(k_n+\ell_{decr}),\boldu^*(x(k_n))_{[\ell_{decr}:q+\ell_{decr}-1]}).
\end{align*}
Along the subsequence $\{\boldsymbol{\omega}^{k_n}\}_{n=0}^{\infty}$, we obtain the following inequalities
\begin{align}\label{eqn:Vstrictineqs_mpc}
    V(\boldsymbol{\omega}^{k_0}) > V(\boldsymbol{\omega}^{k_1})&> \dots > V(\boldsymbol{\omega}^{k_n}) > \dots \geq 0.
\end{align}
Like in the proof of Theorem~\ref{thm:ho-dclf-stab}, the sequence $\{V(\boldsymbol{\omega}^{k_n})\}_{n=0}^\infty$ converges to some $ \delta >0$ as $n \to \infty$, which allows us to define the set $C$ once more. Similarly, we conclude that
\begin{align*}
    &\sup_{(z^0,\boldsymbol{\zeta}) \in C} \left[\min_{l=1,\dots,m}\hspace{-0.1cm} V(z^l,\boldsymbol{\xi}_{[l:q+l-1]}) - V(z^0,\boldsymbol{\zeta}_{[0:q-1]})\right] \\
    & \hspace{40pt}=\max_{(z^0,\boldsymbol{\zeta}) \in C}-\alpha(z^0,\boldsymbol{\zeta}_{[0:q-1]})=: -S<0
    \intertext{and}
    &\hspace{10pt}V(\boldsymbol{\omega}^{k_{n+1}}) - V(\boldsymbol{\omega}^{k_n}) \leq -S \quad \forall n \in \mathbb{N},
\end{align*}
where $\{\boldsymbol{\omega}^{k_n}\}_{n=0}^{\infty}$ are produced by Algorithm~\ref{algo:ho-dclf}, as described above. We obtain the same contradiction as before by observing
\begin{align}\label{eqn:contr_conv_zero_wMPC}
    V(\boldsymbol{\omega}^{k_{\bar N}}) = &V(\boldsymbol{\omega}^{k_0}) + \sum_{n=0}^{\bar N-1}V(\boldsymbol{\omega}^{k_{n+1}})-V(\boldsymbol{\omega}^{k_{n}})\nonumber\\
    \leq &V(\boldsymbol{\omega}^{k_0}) - \bar N\cdot S.
\end{align}
Recall that $V$ is positive definite by assumption due to condition~\eqref{eqn:cond_V_alpha} in Definition~\ref{def:ho-dclf}. If $\bar N>V(\boldsymbol{\omega}^{k_0})/S$, then the inequality~\eqref{eqn:contr_conv_zero_wMPC} implies $V(\boldsymbol{\omega}^{k_{\bar N}})<0$, contradicting the positive definiteness of $V$. Hence, Algorithm~\ref{algo:ho-dclf} generates a control sequence which steers $x(k)$ to zero as $k$ goes to infinity. 
\end{proof}

\begin{remark}\label{rkm:relxkny}
{\em
The sequence $\{(x(k_n), \boldu^{k_n}_{[0:q-1]})\}_{n=0}^\infty$, defined in the proof of Theorem~\ref{thm:algo_convtozero}, is related to $y(k)$, which was introduced earlier to extend~\eqref{eqn:flexstepdynamics}: The augmented variable $y(k)$ is basically equal to $(x(k_n), \boldu^{k_n}_{[0:q-1]})$, except that the elements between the $k_{n+1}$-th and $k_n$-th element are filled up with the constant $(x(k_n), \boldu^{k_n}_{[0:q-1]})$ for all $n \in \mathbb{N}$, i.e.
\begin{align*}
    &[\underbrace{y(k_0), \dots, y(k_1-1)}_{\ell^{k_0}_{decr} \text{ elements}}, \underbrace{y(k_1), \dots, y(k_2-1)}_{\ld^{k_1} \text{ elements}}, y(k_2), \dots]=\\ 
    &\quad [\underbrace{(x(k_0),\boldu^{k_0}_{[0:q-1]}),\dots, (x(k_0),\boldu^{k_0}_{[0:q-1]})}_{\ell^{k_0}_{decr} \text{ times}},\underbrace{(x(k_1),\boldu^{k_1}_{[0:q-1]}), \dots, (x(k_1),\boldu^{k_1}_{[0:q-1]})}_{\ld^{k_1} \text{ times}},(x(k_2),\boldu^{k_2}_{[0:q-1]}), \dots], 
\end{align*}
where $\ell_{decr}^{k_0}$ is the index $\ell_{decr}$ found in Algorithm~\ref{algo:ho-dclf} through the first optimization instance, occurring at $k_0$, $\ell_{decr}^{k_1}$ is found through the next optimization instance, occurring at $k_1$, and so on. \oprocend
}
\end{remark}

\begin{proof}[Proof of Theorem~\ref{thm:algo_conv}]
By solving Problem~\ref{prob:ho-dclf} repeatedly using Algorithm~\ref{algo:ho-dclf}, we generate a control sequence which, by Theorem~\ref{thm:algo_convtozero}, steers any state $x(k_0)$ in the set $X$ to zero. This proves the first statement. For the second statement we show that for all $\varepsilon>0$ and any $k_0 \in \mathbb{N}$, there exists $\bar \delta>0$ such that for any state $x(\initimetimev)\in X$ with $0 <\|x(\initimetimev)\|<\bar \delta$, the state governed by~\eqref{eqn:flexstepdynamics}
satisfies $\|x(k)\|<\varepsilon$, for all $k\geq k_0$. To this end, fix $\varepsilon> 0$ and $k_0\in \mathbb{N}$. As before, we choose $\mathbf{p} \in \mathbb{R}^n \times \mathbb{R}^{p, q}$ such that there exists $r \in (0,\varepsilon)$ satisfying equation~\eqref{eqn:rad_unbdd_proof} and define $B_r, \beta$ and $\Omega_\beta$ accordingly.
By using again the continuity of $V$ and Assumption~\ref{assump:small_control_prop}, we deduce that there exists $\bar\delta>0$ such that for any $x(\initimetimev) \in X$ with $0<\|x(\initimetimev)\|<\bar\delta$, there exists $\boldu^{k_0}_{[0:q-1]}\in \mathbf{U}_{[0:q-1]}(x(k_0))$ with which~\eqref{eqn:Vlessminbeta} holds, i.e.
\begin{align*}
    V(x(k_0),\boldu^{k_0}_{[0:q-1]})< \min\{0.5\beta \sigma_1,0.5\beta \sigma_2,\beta\}.    
\end{align*}
We claim that with the bound~\eqref{eqn:Vlessminbeta} at hand,  Algorithm~\ref{algo:ho-dclf} produces control inputs such that $V(x(k), \mathbf{u}^{k}_{[0:q-1]}) \leq \beta$ for all $k \geq k_0$. Together with equation~\eqref{eqn:rad_unbdd_proof}, this would imply that $(x(k),\boldu^{k}_{[0:q-1]})$ stays in $\Omega_\beta$ for $k\geq k_0$. We first consider the case $\sigma_1 \leq \sigma_2$ to prove the claim. Clearly, $V(x(k_0), \mathbf{u}^{k_0}_{[0:q-1]}) < 0.5\beta\sigma_1$. Consider $x(k_0)$ as the current state in the flexible-step MPC scheme, which defines the initial state of Problem~\ref{prob:ho-dclf}, $x^0=x(k_0)$, and let ${\mathbf{u}^{*}_{[0:N-1]}}(x(k_0))$ with $N=\max\{q+m,N_p\}$ be the optimal input that is obtained in step three of Algorithm~\ref{algo:ho-dclf}. Using the constraint \adc~of Problem~\ref{prob:ho-dclf}, we have  
\begin{align*}
    \sigma_2V(x^{2},\boldu^*&_{[2:q+1]}(x(k_0))) + \sigma_1 V(x^{1},\boldu^*_{[1:q]}(x(k_0))) < 2V(x(k_0),\boldu^{k_0}_{[0:q-1]}) < 2 \frac{\beta\sigma_1}{2} = \beta\sigma_1,
\end{align*}
and therefore, $V(x^{1},\boldu^*_{[1:q]}(x(k_0))) < \beta$ and in an analogous manner $V(x^{2},\boldu^*_{[2:q+1]}(x(k_0)))$ $<\beta$.
The case $\sigma_1 > \sigma_2$ follows similarly. By Proposition~\ref{thm:ex_subsqn_stability}, either $V(x^{1},\boldu^*_{[1:q]}(x(k_0)))$ or $V(x^{2},\boldu^*_{[2:q+1]}(x(k_0)))$ is less than $V(x^{k_0}, \boldu^{k_0}_{[0:q-1]})$ and determines $\ld\in \{1,2\}$. If $\ld=1$, Algorithm~\ref{algo:ho-dclf} has found $x(k_0+1):=x^{1}$ and $\boldu^{k_0+1}_{[0:q-1]}:=\boldu^*_{[1:q]}(x(k_0))$, if $\ld=2$ the algorithm has also found $x(k_0+2):=x^{2}$ and $\boldu^{k_0+2}_{[0:q-1]}:=\boldu^*_{[2:q+1]}(x(k_0))$. The control input $\boldu^*_{[0:\ld-1]}(x(k_0))$ is implemented and the above argument can be repeated with the new state control pair $(x(k_0+l_{decr}),\boldu^{k_0+l_{decr}}_{[0:q-1]})$. The repetition of this process ultimately proves the claim. Therefore, 
\[
(x(k), \boldu^k_{[0:q-1]}) \in \Omega_\beta \Rightarrow (x(k), \boldu^k_{[0:q-1]}) \in B_r \quad \forall k \geq k_0,
\]
and hence, we obtain $\|x(k)\| \leq \|(x(k),\boldu^{k}_{[0:q-1]})\| \leq r < \varepsilon$ for all $k\geq k_0$. As a result, if $x(\initimetimev) \in X$ satisfies $0<\|x(\initimetimev)\|<\bar \delta$, then $\|x(k)\|<\varepsilon$ for all $k \geq k_0$.
\end{proof}

\section{Proof of Proposition~\ref{thm:1st_system_fails_brockett}}\label{sec:appendixB}

In this appendix, we prove Proposition~\ref{thm:1st_system_fails_brockett}. Let us first recall Brockett's necessary condition for the discrete-time case \cite{WL-CB:94}.
\begin{theorem}\label{thm:brockett_cond}
Consider a discrete-time nonlinear control system governed by 
\begin{align*}
    x^{k+1} = f(x^k,u^k),
\end{align*}
with $f(0,0)=0$ and $f$ being smooth in a neighborhood of $(0,0)$. A necessary condition for the existence of a smooth state feedback control law $u^k = u(x^k)$ which renders $(0,0)$ locally asymptotically stable is that for the mapping 
\begin{align}\label{eqn:nec-cond-brockett-discrete}
    \varphi: (x,u) &\mapsto f(x,u)-x\nonumber
    \intertext{the equation}
    \varphi(x,u)&=y
\end{align}
shall be solvable for all sufficiently small $y$. 
\end{theorem}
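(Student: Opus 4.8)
The plan is to prove the contrapositive directly: assuming a smooth stabilizing feedback $u=u(x)$ exists, I would show that the associated residual map is locally surjective, which is exactly the stated solvability. First, form the closed-loop map $F(x):=f(x,u(x))$, continuous near the origin with $F(0)=f(0,u(0))=0$, since asymptotic stability forces the origin to be a fixed point of the closed loop. The key reduction is that it suffices to prove the single-variable map $G(x):=F(x)-x=\varphi(x,u(x))$ attains every sufficiently small value: for each such $y$ the pair $(x,u(x))$ then already solves $\varphi(x,u)=y$, so restricting attention to the feedback graph only strengthens the claim.

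The core is a degree-theoretic computation. Invoking a converse Lyapunov theorem for the asymptotically stable discrete system $x^{k+1}=F(x^k)$, I would fix a continuous Lyapunov function $V$ and a compact sublevel set $\Omega=\{V\le c\}$ that is a neighborhood of the origin, positively invariant with $F(\overline\Omega)\subset\interior\Omega$, and carrying no fixed points on $\partial\Omega$. I then claim $\deg(\mathrm{id}-F,\Omega,0)=1$, i.e. the fixed-point index of the asymptotically stable fixed point equals $+1$. The mechanism is the homotopy $H_t(x):=x-tF(x)$, $t\in[0,1]$, joining $H_0=\mathrm{id}$ to $H_1=\mathrm{id}-F$: a zero of $H_t$ on $\partial\Omega$ would force $x=tF(x)$ with $F(x)$ driven strictly inside $\Omega$ by the descent of $V$, contradicting $x\in\partial\Omega$. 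Hence $H_t$ is zero-free on $\partial\Omega$ for all $t$, and homotopy invariance of the Brouwer degree yields $\deg(\mathrm{id}-F,\Omega,0)=\deg(\mathrm{id},\Omega,0)=1$ because $0\in\interior\Omega$.

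Finally, I would use stability of the degree under perturbation of the target value. Since $\deg(\mathrm{id}-F,\Omega,0)=1\neq 0$ and the degree is locally constant in $w$ off the image of $\partial\Omega$, we get $\deg(\mathrm{id}-F,\Omega,w)=1$ for all $w$ in a neighborhood of the origin. Nonvanishing degree guarantees a solution $x\in\Omega$ of $(\mathrm{id}-F)(x)=w$, i.e. $G(x)=F(x)-x=-w$; as $w$ ranges over a neighborhood of the origin so does $-w$, so $G$ attains every small value. Translating back, for every sufficiently small $y$ there exists $x$ with $\varphi(x,u(x))=y$, which solves $\varphi(x,u)=y$ and establishes the necessary condition.

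The main obstacle is the topological input concentrated in the second paragraph: rigorously certifying that an asymptotically stable fixed point has index $+1$ hinges on choosing the Lyapunov neighborhood $\Omega$ so that the homotopy $H_t$ genuinely remains zero-free on $\partial\Omega$, which is where the strict decrease of $V$ (and, if needed, a mild scaling or star-shapedness property of the sublevel set) must be exploited. Everything else—the reduction to the feedback graph and the passage from nonzero degree to local surjectivity—is routine. I would also remark that the degree theory requires only continuity of $F$, so the smoothness hypotheses in the statement are more than sufficient.
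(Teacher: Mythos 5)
First, a point of reference: the paper does not prove this theorem at all --- it is recalled from \cite{WL-CB:94} --- so your proposal has to be measured against the standard literature argument, which is indeed the degree-theoretic route you sketch. Your outer layers are sound: stability plus continuity forces $F(0)=0$ (orbits starting arbitrarily close to the origin stay in every $\varepsilon$-ball, so $\|F(0)\|\le\varepsilon$ for all $\varepsilon$), restricting $\varphi$ to the graph of the feedback is a legitimate strengthening, obtaining a compact sublevel set $\Omega$ with $F(\overline{\Omega})\subseteq\interior\Omega$ from a continuous converse Lyapunov function is standard, and once $\deg(\mathrm{id}-F,\Omega,0)=1$ is in hand, local constancy of the degree off the compact set $(\mathrm{id}-F)(\partial\Omega)$ gives local surjectivity, with the sign flip $y=-w$ harmless.

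The gap is the one you flagged yourself, and it is genuine rather than a mild technicality. A zero of $H_t$ on $\partial\Omega$ gives $x=tF(x)$; Lyapunov descent yields $F(x)\in\interior\Omega$, but this does \emph{not} contradict $x\in\partial\Omega$, because for $t<1$ the scaled point $tF(x)$ can perfectly well lie on $\partial\Omega$: the interior of $\Omega$ is invariant under scaling toward the origin only if $\Omega$ is star-shaped about $0$, and sublevel sets produced by converse Lyapunov theorems are merely compact neighborhoods, with no star-shapedness guarantee. Nor can you retreat to the linearization: $F$ is smooth here, but asymptotic stability does not place the spectrum of $DF(0)$ strictly inside the unit circle, so $\mathrm{sign}\det(I-DF(0))$ is not available in general. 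What your second paragraph actually needs is the theorem that an asymptotically stable fixed point of a continuous self-map of $\mathbb{R}^n$ has fixed-point index $+1$ (Krasnosel'skii--Zabreiko). That statement is true, but its known proofs require more than the linear homotopy --- e.g., exploiting uniform attraction on $\Omega$ to choose $k$ with $F^k(\Omega)$ inside a small ball and arguing through a Lefschetz-type computation (where relating the index of $F$ to that of $F^k$ is itself delicate), or constructing a star-shaped positively invariant neighborhood, neither of which is immediate. Citing that index theorem as a black box would close your proof; as written, the central degree computation is asserted rather than established.
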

In other words, if we can find a sufficiently small $y$ such that \eqref{eqn:nec-cond-brockett-discrete} is not solvable, then Brockett's necessary condition is violated. This is precisely what we will do in the next proof.
\begin{proof}[Proof of Proposition~\ref{thm:1st_system_fails_brockett}]
To show this result, we will apply Theorem~\ref{thm:brockett_cond} to system~\eqref{sys:plus_000abs}:
\begin{align*}
    \varphi(x,u) &= x + h
        \begin{bmatrix}
        1\\ 
        0\\
        -x_2\\ 
        x_3
        \end{bmatrix}
        u_1 + h
        \begin{bmatrix}
        0\\ 
        1\\
        x_1\\
        x_2
        \end{bmatrix}
        u_2+h
        \begin{bmatrix}
        0\\ 
        0\\
        0\\
        |x_4|
        \end{bmatrix} -x
        = h
        \begin{bmatrix}
        1\\ 
        0\\
        -x_2\\ 
        x_3
        \end{bmatrix}
        u_1 + h
        \begin{bmatrix}
        0\\ 
        1\\
        x_1\\
        x_2
        \end{bmatrix}
        u_2 +h
        \begin{bmatrix}
        0\\ 
        0\\
        0\\
        |x_4|
        \end{bmatrix}\hspace{-0.1cm}.
\intertext{For all $\varepsilon >0$ the equation $\varphi(x,u) = [0\ 0\ 0\ -\hspace{-0.05cm}y_4]^T$ has no solution for any $0<y_4<\varepsilon$. To see this, let us write out the above equation}
\begin{bmatrix}
        0\\
        0\\
        0\\
        -y_4
\end{bmatrix} &=h\left(
        \begin{bmatrix}
        1\\ 
        0\\
        -x_2\\ 
        x_3
        \end{bmatrix}
        u_1 +
        \begin{bmatrix}
        0\\ 
        1\\
        x_1\\
        x_2
        \end{bmatrix}
        u_2
        +
        \begin{bmatrix}
        0\\ 
        0\\
        0\\
        |x_4|
        \end{bmatrix}\right)
    = h\begin{bmatrix}
        u_1 \\u_2\\-x_2u_1 + x_1u_2\\x_3u_1 + x_2u_2 + |x_4|
    \end{bmatrix}\hspace{-0.1cm}.
    \intertext{The first and second component of the equation imply that the controls need to be zero. If the controls are zero, the right-hand side of the fourth component becomes}
    h(x_3u_1 &+ x_2u_2 + |x_4|) = h(0 + 0 + |x_4|).
\end{align*}
Now this shall be equal to the left-hand side of the fourth component, i.e.
    $-y_4 = h|x_4|.$
This equation demands that $-y_4$, which is negative, shall equal $h|x_4|$, which is  non-negative, thus it is not solvable. Brockett's necessary condition is not satisfied for the system \eqref{sys:plus_000abs}. 
\end{proof}

\end{document}